\def\P{\mathbb{P}_\Omega}
\def\wb{\overline{w}}
\def\tu{\widetilde{u}}
\def\la{\lambda}
\def\wb{\overline{w}}
\newcommand\nlp[2]{\left\|#2\right\|_{L^{#1}(\Omega)}}
\newcommand\nlpc[2]{\left\|#2\right\|_{L^{#1}(B_R)}}
\newcommand\nlpr[2]{\left\|#2\right\|_{L^{#1}(\R^n)}}
\newcommand\nlpw[2]{\left\|#2\right\|_{L^{{#1},\infty}(\Omega)}}
\def\tu{\widetilde{u}}
\def\ep{\varepsilon}
\def\al{\alpha}
\def\om{\omega}
\def\Om{\Omega}
\def\loc{_{{\rm loc}}}
\def\wu{\widetilde{u}}
\def\L2s{L^2_\sigma (\Omega)}
\def\Lps{L^p_\sigma (\Omega)}
\def\Lqs{L^q_\sigma (\Omega)}
\def\LpO{{L^p(\Omega)}}
\newtheorem{theorem}{Theorem}[section]
\newtheorem{lemma}[theorem]{Lemma}
\newtheorem{definition}[theorem]{Definition}
\newtheorem{proposition}[theorem]{Proposition}
\newtheorem{corollary}[theorem]{Corollary}
\numberwithin{equation}{section} 
\theoremstyle{remark}
\newtheorem{remark}[theorem]{Remark}
\newcommand{\real}{\mathbb{R}}
\newcommand{\be}{\begin{equation}}
\newcommand{\ee}{\end{equation}}
\renewcommand{\leq}{\leqslant}
\renewcommand{\geq}{\geqslant}
\def\R{\mathbb{R}}
\def\ppom{\mathbb{P}_{\Omega}}
\DeclareMathOperator\curl{curl}
\DeclareMathOperator\dive{div}
\DeclareMathOperator\supp{supp}
\DeclareMathOperator\mes{mes}
\def\vb{\overline{v}}
\def\blfootnote{\gdef\@thefnmark{}\@footnotetext}
\title{\bf Asymptotics of solutions to the Navier-Stokes system in exterior domains}
\author{Drago\c s Iftimie
, Grzegorz Karch \& Christophe  Lacave}
\def\adrese{
\begin{description}
\item[D. Iftimie:] Universit\'e de Lyon, CNRS, Universit\'e Lyon 1, Institut Camille Jordan, 43 bd. du 11 novembre, Villeurbanne Cedex F-69622, France.\\
Email: \texttt{iftimie@math.univ-lyon1.fr}\\
Web page: \texttt{http://math.univ-lyon1.fr/\~{}iftimie} 
\item[G. Karch:] Instytut Matematyczny, Uniwersytet Wroc{\l}awski,
 pl. Grunwaldzki 2/4, 50-384 Wroc{\l}aw, Poland.\\
Email: \texttt{grzegorz.karch@math.uni.wroc.pl}\\
Web page: \texttt{http://www.math.uni.wroc.pl/\~{}karch}
\item[C. Lacave:] Universit\'e Paris-Diderot (Paris 7), Institut de Math\'ematiques de Jussieu - Paris Rive Gauche, UMR 7586 - CNRS, B\^atiment Sophie Germain, Case 7012, 75205 PARIS Cedex 13, France.\\
Email: \texttt{lacave@math.jussieu.fr}\\
Web page: \texttt{http://people.math.jussieu.fr/\~{}lacave/} 
\end{description}
}
\date{}
\begin{document} 
\maketitle
\begin{abstract}
We consider the incompressible Navier-Stokes equations with the Dirichlet boundary condition in an exterior domain of $\mathbb{R}^n$ with  $n\geq2$. 
We compare the long-time behaviour of  solutions to this initial-boundary value problem with the long-time behaviour of  solutions 
of the analogous Cauchy problem 
in the whole space $\mathbb{R}^n$. 
We find that the long-time asymptotics of solutions to both problems
 coincide either  in the case of small initial data in the weak $L^{n}$-space or
  for a certain class of large initial data.
\end{abstract}

\section{Introduction}

\blfootnote{Mathematics Subject Classification: 35Q30, 76D05, 35Q35.}

\subsection*{Large time behaviour of solutions.}
It is well-known that the large time behaviour of  solutions to the initial-value problem for 
the Navier-Stokes equations considered either in the whole space $\R^n$ with $n\geq 2$ or in an exterior domain
depends on the integrability properties of the  initial conditions.
 In the finite energy case, that is when the initial  velocity is square integrable, the
$L^2$-norm of  the corresponding 
solution tends to zero as 
 time goes to infinity, see {\it e.g.}  \cite{kato_1984,MR775190,wiegner,kajikiya-miyakawa} 
for the problem in the whole space $\R^n$ and \cite{MR1158939} for 
analogous results  in exterior domains.  
In such a  case, the nonlinear effects are negligible for large values of time and the asymptotics of  the solutions is determined by the corresponding Stokes semigroup, {\it cf.~e.g.}~\cite[Thm.~2]{kajikiya-miyakawa} .

On the other hand, when the initial velocity is not square integrable, a solution of the initial value problem for the
Navier-Stokes equations  in  $\R^n$  
 can be constructed in a so-called  scaling invariant space 
({\it e.g.}~in a homogeneous Besov space or
in a weak $L^n$-space) under a suitable smallness assumption on  the initial data, see the
review article by Cannone \cite{cannone} and the book of
Lemari\'e-Rieusset 
\cite{MR1938147} for more details. 
In particular, 
if the initial velocity is small and  homogeneous of degree $-1$, the corresponding solution is self-similar and such self-similar solutions describe
 the large time behaviour of a large class of solutions of the Navier-Stokes system in $\R^n$, see {\it e.g.} \cite{MR1639283,MR1689406}
and \cite[Ch.~23]{MR1938147}. 
Here, the asymptotic of  solutions  is no longer determined by the Stokes semi-group due to the fact that the viscosity term and the bilinear term 
are in exact  balance in the sense that  none of them dominates
the large time behaviour.  

Finally, it should be noted that, for certain initial data, the
large time behaviour of the corresponding solutions can be much more involved.
Indeed, the authors of  \cite{cazenave_chaotic_2005}  noticed a chaotic behaviour of some solutions of the Navier-Stokes system, namely, the sequence $\{u(t_n,x)\}$  may exhibit different  asymptotic properties
  as $t_n\to\infty$, depending on a choice of the  sequence $\{t_n\}$.

\subsection*{Lamb-Oseen vortex in two dimensions.}
Now, we focus for a moment on the self-similar large time behaviour of solutions of the two dimensional Navier-Stokes system.
As an important and physically relevant case, let us recall that 
a velocity in $\R^2$ corresponding  to an integrable 
vorticity  is not square integrable in general. This is an immediate consequence of the Biot-Savart law (see {\it e.g.}~\cite{MR1017289}). 
The large time asymptotic of solutions of the Navier-Stokes system in $\R^2$ supplemented with such initial 
conditions
is well-understood. To recall this result, 
 let us introduce the Lamb-Oseen vortex
\begin{equation*}
\Theta(t,x)= \frac{x^\perp}{2\pi|x|^2}\Bigl(1-e^{-\frac{|x|^2}{4t}}\Bigr)
\qquad \text{with}\quad  x^\perp =(x_2,-x_1),
\end{equation*}
which is an explicit self-similar solution of the system in $\R^2$ corresponding to  the initial velocity $\Theta_0(x)=\frac{x^\perp}{2\pi|x|^2}$ and the initial vorticity $\curl\Theta_0=\delta_0$ (the Dirac mass). 
The Lamb-Oseen vortex $\Theta$
is known to characterize the large time behaviour of the solutions of the Navier-Stokes system in $\R^2$ supplemented with the initial datum $u_0$ satisfying 
$\om_0=\curl u_0\in L^1(\R^2)$ in the following sense 
\begin{equation*}
  \lim_{t\to\infty}t^{\frac12-\frac1p}\|u(t)-m\Theta(t)\|_{L^p}=0\qquad\text{for all}\quad  p\in (2,\infty],
\end{equation*}
where $m=\int_{\R^2}\omega_0\;dx$. This result  is due to 
Giga \& Kambe \cite{GK88} if $\|\om_0\|_{L^1}$ is small, to Carpio
\cite{Ca94} in the case when $\int_{\R^2}\omega_0\;dx  $ is small, and 
to Gallay \& Wayne
\cite{GW05} in the general case (together with the higher order terms in the asymptotic expansion of solutions).

Such general asymptotic results are not known for problems in two dimensional exterior domains.
  It is  not even clear whether  the hypothesis on the integrability of  the
 initial 
vorticity is   relevant. Here,
no  $L^1$-bound for the vorticity is known  because of the absence of any reasonable boundary condition for the vorticity.
Thus, to overcome this technical problem
in our unpublished manuscript \cite{ILK11}, 
we assumed that the initial  velocity behaves for large values of $|x|$ 
like a multiple of $\frac{x^\perp}{|x|^2}$.
More precisely, we showed in \cite{ILK11}
that if $u_0(x)=w_0(x)+\al\frac{x^\perp}{|x|^2}$ with $w_0\in L^2(\Om)$ and $\alpha\in\R$, then there exists $\al_0=\al_0 (w_0,\Omega)>0$ such that  for all  $|\al|\leq \al_0$ we have that 
\begin{equation*}
  \lim_{t\to\infty}t^{\frac12-\frac1p}\|u(t)-\al\Theta (t)\|_\LpO=0
\end{equation*}
for each $p\in (2,\infty)$.
Comparing with the analogous result in the full plane case, 
we have an additional smallness condition on  the parameter $\alpha$, which is  due to the fact that neither $L^2$-estimates for the velocity nor $L^p$-estimates for the vorticity are known for the problem in an exterior domain (because the velocity is not square-integrable and because of the absence of boundary conditions for the vorticity). 
Consequently, we have proved in  \cite{ILK11} that 
 the large time asymptotics of the solutions of the problem in an exterior domain is the same as in the full plane case and is given by   the Lamb-Oseen vortex. 
This unpublished result  is now a particular case of Theorem~\ref{dim2}, below (see Remark \ref{rem:d2:old}).
Our result from  \cite{ILK11}  has been recently improved by Gallay \& Maekawa \cite{GM11}, where 
 the smallness constant $\al_0$ can be chosen independently of $w_0$ if one 
imposes the additional minor assumption that $w_0\in L^q(\Omega)$ for some $q<2$.

\subsection*{More general slowly decaying initial conditions.}

Even though the most physically interesting  case in two dimensions occurs when $u_0(x)\simeq C\frac{x^\perp}{|x|^2}$ at infinity, one could consider other behaviours of $u_0$ at infinity. 
For example, one could assume that the initial velocity  behaves at infinity 
like an arbitrary  divergence-free homogeneous vector field of degree $-1$. 
More generally, one can consider initial velocities from  
the Marcinkiewicz (weak $L^2$) space $L^{2,\infty}(\Omega)$;
however, in such a general setting and  due to
the chaotic behaviour observed in \cite{cazenave_chaotic_2005},
 we may not actually have any clear asymptotics of solutions for large values of times. 

To avoid such a difficulty, instead of looking for the 
 asymptotic behaviour of solutions to the problem in an exterior domain directly,  we will compare the
solutions of 
the Navier-Stokes system in an exterior domain   $\Omega\subset \R^n$ $(n\geq 2)$  to the solutions of the Navier-Stokes system in the whole space $\R^n$  and we show that the solutions to both problems behave, as $t\to\infty$, 
in the same way provided that   their initial data are {\it ``comparable at infinity''} (meaning that the difference is slightly better than $L^{n,\infty}$ at infinity, see the definition below). 
This approach allows us to remove the obstacle from the problem under considerations
and to reduce the study of the Navier-Stokes system in an exterior domain 
to the  study of this system in the whole space $\R^n$ (at least as far as large time asymptotics are concerned).

We discuss now what kind of initial conditions can be used in our approach.

\begin{definition}\label{def:eq}
 Let $v_0$ be a divergence free vector field defined on $\R^n$ 
and let $u_0$ be a divergence free vector field defined on an exterior domain $\Omega\subset \R^n$. Assume moreover that $u_0$ is  tangent to the boundary of $\Omega$, {\it i.e.~}$u\cdot \nu=0$ on the boundary, where $\nu$ is the normal vector to the boundary.
We say that $u_0$ and $v_0$ are {\it comparable at infinity}
 if $u_0-v_0\vert_{\Omega}$ belongs to $L^{q_0}(\Omega)$ for some $q_0\in (1,n]$.
\end{definition}
Given $v_0\in L^{n,\infty}(\R^n)$ a divergence free vector field, examples of vector fields $u_0$ comparable at infinity are $u_0=\ppom v_0\bigl|_\Om$ and also the vector field obtained from $v_0$ by the truncation procedure described in Section~\ref{sec:linear} (which in dimension two  corresponds to cuting-off the stream function). Here, $\ppom$ is the Leray projector associated to $\Om$, \textit{i.e.}~the $L^2$-orthogonal projection on the space of vector fields which are divergence free and tangent to the boundary of $\Omega$. 
Note also that if $v_0$ is the extension of $u_0$ to $\R^n$ with zero values on $\R^n\setminus\Om$, then $u_0$ and $v_0$ are comparable at infinity. Indeed, in this case, we have that  $u_0=\ppom v_0\bigl|_\Om$, 
see below  for more details.

\subsection*{Results of this work.}

Now, we briefly present the main results of this work.
Let $v$ be a solution of the Navier-Stokes equations in $\R^n$ with initial velocity $v_0$:
\begin{equation*}
 \partial_t v-\Delta v+v\cdot\nabla v=-\nabla p \quad\text{in }\R^n, \qquad v\bigl|_{t=0}=v_0,
\end{equation*}
and let  $u$ be a solution of the Navier-Stokes equations in the exterior domain $\Omega\subset \R^n$  $(n\geq 2)$ with initial velocity $u_0$ and with the homogeneous Dirichlet boundary conditions:
\begin{equation}\label{nsext}
\partial_t u-\Delta u+u\cdot\nabla u=-\nabla q \quad\text{in }\Omega, \qquad u\bigl|_{t=0}=u_0,\qquad u\bigl|_{\partial\Om}=0.    
\end{equation}
The main result of this paper, formulated in  Theorem  \ref{corwithsmallu} below,  states  that if $u_0$ and $v_0$ are comparable at infinity and small in the norm of the space $L^{n,\infty}$, then $u$ and $v$ have the same large time behaviour in the following sense 
\begin{equation}\label{intro:asymp}
\lim_{t\to\infty}t^{\frac12-\frac{n}{2p}}\nlp p{u(t)-v(t)}=0
\end{equation}
for all $n<p<\infty$.
\begin{remark}
The decay rate in \eqref{intro:asymp} corresponds to the optimal decay  of  solutions measured in $L^p$-norms.
Indeed, if $v_0\in L^{n,\infty}(\R^n)$ is small and homogeneous of degree $-1$, then the corresponding solution is self-similar: $v(x,t)=t^{-1/2}v(x t^{-1/2},1)$ for all $x\in \R^n$ and $t>0$.
Hence, changing variables, we obtain $t^{\frac12-\frac{n}{2p}}\nlp p{v(t)}=\nlp p{v(1)}$ for all $t>0$.
\end{remark}

Next, we can extend this large time asymptotics result to a class of large initial data. In  dimension $n\geq3$, we can replace the smallness
of $u_0,v_0$  in the Marcinkiewicz space $L^{n,\infty}(\Omega)$ 
by the smallness of both quantities 
$$
\limsup\limits_{\la\to0}\la\mes\{x\in\Omega\,:\, |u_0(x)|>\la\}^{\frac1n}\qquad \text{and}\qquad  
\limsup\limits_{\la\to0}\la\mes\{x\in\R^n\,:\, |v_0(x)|>\la\}^{\frac1n}
$$ 
in order to show in  Theorem \ref{largedata} below
that 
the limit \eqref{intro:asymp} is again valid.
However, with this improved smallness assumption, the uniqueness of the solutions is no longer ensured and our large time asymptotics result holds true for some weak solutions, only. 

In  dimension two, the statement of our large data asymptotic result is more involved and we refer the reader to Theorem \ref{dim2} for its precise formulation. 
Here, we only point out the following special case. 
The limit in \eqref{intro:asymp}   holds true 
for the problem in an exterior two dimensional domain under the following hypothesis. The initial velocity $u_0$ for the exterior domain problem can be decomposed in  the form $u_0=\tu_0+w_0$, where $\tu_0\in L^2(\Om)$ is divergence free and tangent to the boundary (and arbitrarily large) and $w_0\in L^{2,\infty}(\Om)\cap B^{-1/2}_{4,4}(\Om) $ verifies the smallness assumptions  $\nlpw2{w_0}<\ep_1(\Om)$ and $\|w_0\|_{  B^{-1/2}_{4,4}(\Om)}<\ep_2(\tu_0)$ for some small positive constants $\ep_1$ and $\ep_2$. The initial velocity $v_0$ for the problem in $\R^2$ can be decomposed in a similar manner $v_0=\widetilde U_0+W_0$ with similar conditions on $\widetilde U_0$ and $W_0$. Moreover, $w_0$ is assumed to be comparable at infinity to $W_0$.

We mention now two applications of our results. The first one is that if the initial velocity for the exterior domain problem is small in $L^{n,\infty}(\Omega)$ and comparable at infinity to a velocity field homogeneous of degree -1, then the large time behaviour of the solution is a self-similar behaviour. The second one is that there exists an initial data for the exterior domain problem such that the corresponding solution exhibits a chaotic behaviour at infinity. Indeed, it suffices to consider $v_0$ the example of initial data in $\R^n$ exhibited in \cite{cazenave_chaotic_2005} and set $u_0=\ppom v_0\bigl|_\Om$ as initial velocity for the exterior domain problem.

The remainder of this paper is organized in the following manner. In the next section, we introduce the notation, we recall the decay estimates for
the Stokes semigroup, and we show some preliminary technical lemmas concerning initial data comparable at infinity.
 In Section~\ref{sect:withsmallu}, we prove our asymptotic result in the case of small initial data. The case of large data in dimension $n\geq3$ is considered in Section \ref{sectdim3}. 
Section \ref{sectdim2} deals with 
the  asymptotic  behaviour of solutions with
large initial conditions in dimension two.

 \section{Preliminaries} \label{sec:linear}

\subsection*{Notation}
Let $\Om\subset \R^n$ $(n\geq 2)$ be an  exterior domain with a smooth boundary $\Gamma$,  and choose $R>0$ such that $\R^n\setminus\Om\subset B_{R/2}$. 
Here, we set $B_{R/2}=B(0,R/2)$ for the ball of radius $R/2$ centered at the origin. 
We denote by $\P$ the Leray projector in $\Om$, \textit{i.e.} the $L^2$ orthogonal projection from $L^2(\Om)$ on the subspace of divergence free vector fields which are tangent to the boundary $\Gamma$. 
It is well-known that  $\P$ extends to a bounded operator on every $L^p(\Om)$, $1<p<\infty$. We denote by $L^p_\sigma(\Omega)$  the closure of the set of smooth, divergence-free, and compactly supported  vector fields  $C^{\infty}_c(\Om)$ with respect to the usual $L^p$-norm.  The space  $L^p_\sigma(\Omega)$ can also be viewed as the image of   $L^p(\Omega)$ by $\ppom$. 
In a similar way, we write that $u\in L^{p,\infty}_\sigma(\Omega)$ when $u\in L^{p,\infty}(\Omega)^n$, 
$\dive u =0$ in $\Om$, and $u \cdot \nu=0$
on the boundary $\Gamma$, where 
$ L^{p,\infty}(\Omega)$ denotes the usual weak $L^p$-space (the Marcinkiewicz space) and 
$\nu$ 
is the normal vector to the boundary $\partial\Omega$. 
In this work, we use systematically a fixed cut-off function $f\in C^\infty(\R^n, \R_{+})$ 
such that $f(x)=0$ for $|x|<R/2$ and $f(x)=1$ for $|x|>2R/3$. 

\subsection*{Stokes semigroup}
The stationary Stokes operator 
$A=-\P\Delta$ generates an analytic semigroup of linear operators 
 $\{e^{-tA}\}_{t\geq 0}$ 
 on $\Lps$ for each $1<p<\infty$, {\it cf.}~\cite{MR635201}.
If $v_0$ is divergence free and tangent to the boundary of $\Omega$, then $v(t)=e^{-tA}v_0$ is the solution of the following linear boundary value problem
\begin{align*}
&\partial_t v -\Delta v  +\nabla p =0, \qquad \dive v=0  & &\text{for}\quad t>0, \quad x\in \Om, \\
&v(t,x)=0  & &\text{for}\quad t>0, \quad x\in \Gamma,\\
&v(0,x)=v_0(x) & & \text{for} \quad x\in \Om.
\end{align*}

 The Stokes semigroup $\{e^{-tA}\}_{t\geq 0}$
satisfies the  following $L^p$-decay estimates.

\begin{proposition} \label{ellpest}
Assume that  $1 < q <\infty$.

Let  $q \leq p\leq\infty$. There exists $K_1 = K_1(\Om,p,q)>0$ such that for every $v_0 \in \Lqs$
\begin{equation}\label{S1}
\|e^{-tA}v_0\|_{L^p(\Omega)} \leq K_1 t^{\frac{n}{2p} - \frac{n}{2q}}\|v_0\|_{L^q(\Omega)}
\qquad \text{for all}\quad  t>0.
\end{equation} 
If,  in addition, we assume that $q<p\leq\infty$, then   for every $v_0\in L^{q,\infty}_\sigma(\Omega)$ we also have that 
\begin{equation}\label{S1bis}
 \|e^{-tA}v_0\|_{L^p(\Omega)} \leq K_1 t^{\frac{n}{2p} - \frac{n}{2q}}\|v_0\|_{L^{q,\infty}(\Omega)}
\qquad \text{for all}\quad  t>0.
\end{equation}

There exists $K_2 = K_2(\Om,q)>0$ such that 
for every $v_0\in L^{q,\infty}_\sigma(\Omega)$ we have 
the inequality
\begin{equation}\label{S1bis2}
 \|e^{-tA}v_0\|_{L^{q,\infty}(\Omega)} \leq K_2 \|v_0\|_{L^{q,\infty}(\Omega)}
\qquad \text{for all}\quad  t>0.
\end{equation}

There exists $K_3 = K_3(\Om,q)>0$ such that 
for every $v_0\in L^{q}_\sigma(\Omega)$ we have 
the inequality
\begin{equation}\label{analytic}
 \|Ae^{-tA}v_0\|_{L^{q}(\Omega)} \leq K_3 t^{-1}\|v_0\|_{L^{q}(\Omega)}
\qquad \text{for all}\quad  t>0.
\end{equation}

Finally, if $n \leq q \leq p < \infty$ then  there exists $K_4= K_4(\Om,p,q)>0$
such that  for every matrix valued function $F \in L^q(\Omega;M_{n}(\real))$ we have that
\begin{equation}\label{S3}
 \| e^{-tA}\ppom \dive F\|_{L^p(\Omega)} \leq K_4 t^{-\frac{1}{2} + \frac{n}{2p} - \frac{n}{2q}}
\|F\|_{L^q(\Omega)}
\qquad \text{for all}\quad  t>0,
\end{equation}
with the divergence operator  $\dive$ computed  along the rows of the matrix $F$.

\end{proposition}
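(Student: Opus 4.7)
All of the bounds in the proposition are essentially available in the literature on the Stokes semigroup in exterior domains, so my plan is to pinpoint the correct sources for the strong-type estimates \eqref{S1} and \eqref{analytic}, and then to derive the weak-type variants \eqref{S1bis}, \eqref{S1bis2} and the divergence-form estimate \eqref{S3} by standard interpolation and duality arguments.

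For \eqref{S1} and \eqref{analytic} I would rely on known results. The analyticity of $\{e^{-tA}\}_{t\geq 0}$ on $L^q_\sigma(\Omega)$ is the main output of \cite{MR635201}, and \eqref{analytic} is then simply the universal smoothing bound of an analytic semigroup. Similarly, the diagonal case $p=q$ of \eqref{S1} is the uniform $L^q_\sigma$-boundedness of $e^{-tA}$, while the off-diagonal decay in the range $q\leq p\leq\infty$ is due to Iwashita (with the $L^\infty$-endpoint refined by Borchers--Miyakawa and Maremonti--Solonnikov). I would just quote these.

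The weak-type estimates \eqref{S1bis} and \eqref{S1bis2} would be deduced from \eqref{S1} by real (Marcinkiewicz) interpolation. Fix $1<q_1<q<q_2<\infty$ with $\frac{1}{q}=\frac{1-\theta}{q_1}+\frac{\theta}{q_2}$; then $L^{q,\infty}_\sigma(\Omega)=(L^{q_1}_\sigma,L^{q_2}_\sigma)_{\theta,\infty}$, and applying Marcinkiewicz interpolation to the map $v_0\mapsto e^{-tA}v_0$ with common target $L^p(\Omega)$ (respectively $L^{q,\infty}(\Omega)$) produces exactly the announced bounds. The strict inequality $q<p$ in \eqref{S1bis} is needed so that $q_2$ may be chosen strictly less than $p$, preserving a negative exponent of $t$ in the interpolation.

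Finally, for the divergence-form estimate \eqref{S3} I would proceed by duality. For smooth divergence-free compactly supported $\phi$, the formal identity
\begin{equation*}
\bigl\langle e^{-tA}\ppom\dive F,\phi\bigr\rangle=-\bigl\langle F,\nabla e^{-tA^{*}}\phi\bigr\rangle
\end{equation*}
reduces the task to proving $\|\nabla e^{-tA^{*}}\phi\|_{L^{q'}(\Omega)}\leq C\,t^{-\frac12+\frac{n}{2q'}-\frac{n}{2p'}}\|\phi\|_{L^{p'}(\Omega)}$. I would factor $\nabla e^{-tA^{*}}=(\nabla A^{-1/2})\circ(A^{1/2}e^{-(t/2)A^{*}})\circ e^{-(t/2)A^{*}}$ and bound the three pieces in turn by the $L^{q'}$-boundedness of $\nabla A^{-1/2}$, the analyticity estimate \eqref{analytic} applied to $A^{*}$, and the already-established \eqref{S1} for $A^{*}$; taking the supremum over $\|\phi\|_{L^{p'}}\leq 1$ then yields \eqref{S3}. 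The main obstacle is the boundedness of $\nabla A^{-1/2}$ on $L^{q'}(\Omega)$, which in exterior domains is known only for a restricted range of exponents; the hypothesis $n\leq q\leq p<\infty$ appearing in \eqref{S3} is exactly the condition needed to keep $q'$ inside the admissible range and should be invoked at this point.
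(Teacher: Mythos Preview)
Your proposal is correct and in fact more detailed than what the paper provides. The paper does not give a proof of this proposition at all: it simply states that estimates \eqref{S1}--\eqref{S1bis2} were proved in \cite{DS99a,DS99,KY95,MS97} and that \eqref{analytic} follows from the analyticity of the semigroup \cite{MR635201}; the divergence-form bound \eqref{S3} is not even explicitly attributed. Your plan follows the same philosophy of deferring to the literature for the strong-type estimates, but you additionally sketch how the weak-type bounds \eqref{S1bis}, \eqref{S1bis2} follow from \eqref{S1} by real interpolation and how \eqref{S3} follows by duality and the $L^{q'}$-boundedness of $\nabla A^{-1/2}$. These derivations are standard and correct; the restriction $n\leq q$ indeed places $q'\leq n/(n-1)$ in the admissible range for the gradient estimate in exterior domains.

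One minor remark on references: you cite Iwashita for the $L^q$--$L^p$ smoothing, which is the standard source for $n\geq3$, whereas the paper cites Dan--Shibata \cite{DS99a,DS99} (needed for the two-dimensional case) and Kozono--Yamazaki \cite{KY95} (where the weak-$L^n$ versions \eqref{S1bis}, \eqref{S1bis2} are worked out directly). Since the paper covers $n\geq2$, you should include a reference handling the planar case as well.
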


 Estimates \eqref{S1}--\eqref{S1bis2} were proved
in \cite{DS99a,DS99,KY95,MS97}.  Relation \eqref{analytic} is a consequence of the fact that $e^{-tA}$ is an analytic semigroup, see \cite{MR635201}.

The following corollary contains a minor improvement of the decay estimate \eqref{S1}.

\begin{corollary}\label{cor:dec}
Assume that  $1 < q <\infty$ and let $v_0 \in \Lqs$. Then for every  $p\in [q,\infty)$
\begin{equation*}
\lim_{t\to\infty} t^{ \frac{n}{2q}-\frac{n}{2p}} \|e^{-tA}v_0\|_{L^p(\Omega)}=0. 
\end{equation*} 
\end{corollary}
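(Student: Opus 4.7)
The plan is a standard density argument: the estimate \eqref{S1} with exponents $(p,q)$ gives a uniform bound on $t^{\frac{n}{2q}-\frac{n}{2p}}\|e^{-tA}v_0\|_{L^p(\Omega)}$, and applying it with a smaller base exponent gives strict decay. Splitting $v_0$ into a small $L^q$-piece plus a smoother piece then combines these.

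More precisely, fix $\varepsilon>0$ and use the definition of $L^q_\sigma(\Omega)$ as the $L^q$-closure of $C^\infty_{c,\sigma}(\Omega)$ to choose $v_0^\varepsilon\in C^\infty_{c,\sigma}(\Omega)$ with $\|v_0-v_0^\varepsilon\|_{L^q(\Omega)}<\varepsilon$. Since $v_0^\varepsilon$ is smooth and compactly supported, in particular $v_0^\varepsilon\in L^r_\sigma(\Omega)$ for every $r\in(1,q)$. Write
\begin{equation*}
e^{-tA}v_0=e^{-tA}(v_0-v_0^\varepsilon)+e^{-tA}v_0^\varepsilon.
\end{equation*}

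By \eqref{S1} applied with the pair $(p,q)$ (admissible since $q\leq p<\infty$),
\begin{equation*}
t^{\frac{n}{2q}-\frac{n}{2p}}\|e^{-tA}(v_0-v_0^\varepsilon)\|_{L^p(\Omega)}\leq K_1\|v_0-v_0^\varepsilon\|_{L^q(\Omega)}<K_1\varepsilon.
\end{equation*}
Fixing any $r\in(1,q)$ and applying \eqref{S1} again, now with the pair $(p,r)$,
\begin{equation*}
t^{\frac{n}{2q}-\frac{n}{2p}}\|e^{-tA}v_0^\varepsilon\|_{L^p(\Omega)}\leq K_1 t^{\frac{n}{2q}-\frac{n}{2r}}\|v_0^\varepsilon\|_{L^r(\Omega)}.
\end{equation*}
Since $r<q$, the exponent $\frac{n}{2q}-\frac{n}{2r}$ is strictly negative, and this term tends to $0$ as $t\to\infty$.

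Combining the two bounds and using the triangle inequality,
\begin{equation*}
\limsup_{t\to\infty}t^{\frac{n}{2q}-\frac{n}{2p}}\|e^{-tA}v_0\|_{L^p(\Omega)}\leq K_1\varepsilon,
\end{equation*}
and since $\varepsilon>0$ is arbitrary the limit is zero. There is no real obstacle; the only point to verify is the availability of an exponent $r\in(1,q)$ to which \eqref{S1} applies, which is exactly where the hypothesis $q>1$ is used.
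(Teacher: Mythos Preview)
Your argument is correct and is exactly the density argument the paper sketches: the paper simply asserts that the limit is clear for smooth compactly supported $v_0$ and then invokes density together with \eqref{S1}, whereas you spell out explicitly (via the auxiliary exponent $r\in(1,q)$) why the smooth compactly supported case yields the limit. The approaches coincide.
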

\begin{proof}
The validity of this limit is clear when the initial datum $v_0$ is smooth and compactly supported.
To show it for all   $v_0 \in \Lqs$, it suffices to use a standard density argument combined with estimate \eqref{S1}.
\end{proof}

\subsection*{Initial data comparable at infinity}
Before  we compare the large time behaviour of the Navier-Stokes equations in exterior domains with the large time behaviour of the Navier-Stokes equations in the whole $\R^n$,
 we have to clarify the issue of the  initial data. 
More precisely, given a divergence-free initial datum  $v_0$ on $\R^n$, we would like to construct an initial datum $u_0$ for the problem in the exterior domain which is comparable at infinity with $v_0$. 
The simplest approach which consists in taking the restriction of  $v_0$ to $\Om$, 
{\it i.e.} $u_0=v_0\bigl|_{\Om}$,  does not work because in general this restriction is  not  
tangent to the boundary $\Gamma$. 
Thus,
in order to obtain a vector field which is divergence free and tangent to the boundary, the most natural way is to define $u_0$ by applying the Leray projection to the restriction of $v_0$ to $\Om$:
  \begin{equation}\label{defproj}
    u_0=\P (v_0\bigl|_{\Om}).
  \end{equation}
{\it Vice versa}, given a velocity field  $u_0$ on $\Om$ which is 
divergence free and tangent to the boundary,
we can construct a velocity field $v_0$ in $\R^n$, simply
 by extending $u_0$ with zero values outside $\Om$. 
Here, the divergence free condition is preserved across the boundary because $u_0$ is tangent to the boundary. Hence, with this choice of $v_0$
we clearly have relation \eqref{defproj}.

Unfortunately, defining $u_0$ as in \eqref{defproj} is not  practical from the point of view of  estimates,
 because $\P (v_0\bigl|_{\Om})$ does not verify the Dirichlet boundary condition. 
Instead, we  will use  a cutoff procedure that we detail now.

First, we need to prove a technical result.
\begin{lemma}\label{defpsic}
Let $1<p<\infty$ and $v\in L^p(\R^n)$ be a vector field. There exists a unique matrix valued function $\psi$ such that 
\begin{equation}\label{defpsil}
\psi\in W^{1,p}\loc(\R^n),\quad \nabla\psi\in L^p(\R^n), \quad \int_{B_R}\psi=0\quad\text{and}\quad \Delta\psi_{ij}=\partial_j v_i-\partial_i v_j
\end{equation}
for all $i,j$. Moreover, there exists a constant $C=C(p,R)$ such that
\begin{equation}\label{boundS}
\|\psi\|_{L^p(B_{R})}+\nlpr p{\nabla \psi}\leq C\nlpr pv.
\end{equation}
\end{lemma}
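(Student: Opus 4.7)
The plan is to treat each scalar entry $\psi_{ij}$ separately and construct it by recovering its gradient from Riesz transforms. For uniqueness, suppose $\psi^{(1)},\psi^{(2)}$ both satisfy the stated conditions; the difference $\varphi$ has $\Delta\varphi_{ij}=0$ on $\R^n$, so by elliptic regularity each $\varphi_{ij}$ is smooth and $\nabla\varphi_{ij}$ is itself harmonic and lies in $L^p(\R^n)$. The mean-value property on balls of radius $r$ gives $|\nabla\varphi_{ij}(x)|\leq Cr^{-n/p}\nlpr{p}{\nabla\varphi_{ij}}$, which tends to zero as $r\to\infty$; hence $\varphi_{ij}$ is constant, and the constraint $\int_{B_R}\varphi=0$ forces that constant to vanish.

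For existence, for each pair $(i,j)$ and each $k\in\{1,\ldots,n\}$ I define
\[a^{(k)}_{ij}:=-R_k R_j v_i+R_k R_i v_j,\]
where $R_l$ denotes the $l$-th Riesz transform on $\R^n$. By the Calder\'on--Zygmund $L^p$ theory, each $a^{(k)}_{ij}\in L^p(\R^n)$ with $\nlpr{p}{a^{(k)}_{ij}}\leq C(p)\nlpr{p}{v}$. A Fourier-side inspection,
\[\widehat{a^{(k)}_{ij}}(\xi)=\frac{\xi_k\xi_j\widehat{v_i}(\xi)-\xi_k\xi_i\widehat{v_j}(\xi)}{|\xi|^2},\]
shows that $i\xi_l\widehat{a^{(k)}_{ij}}$ is symmetric in $(k,l)$, so the vector field $(a^{(1)}_{ij},\ldots,a^{(n)}_{ij})$ is curl-free in $\mathcal{D}'(\R^n)$. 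Because $\R^n$ is simply connected, a standard mollification argument (convolve with $\rho_\vare$, take the path-integral primitive of the resulting smooth curl-free field, renormalize so that its mean on $B_R$ vanishes, and pass to a weak limit using Poincar\'e--Wirtinger on each dyadic ball $B_{2^m R}$ to control $\psi^\vare$ away from $B_R$) produces $\psi_{ij}\in W^{1,p}\loc(\R^n)$ satisfying $\partial_k\psi_{ij}=a^{(k)}_{ij}$ a.e.\ and $\int_{B_R}\psi_{ij}=0$. The PDE is verified in Fourier,
\[\widehat{\Delta\psi_{ij}}(\xi)=-|\xi|^2\cdot\frac{-i\xi_j\widehat{v_i}+i\xi_i\widehat{v_j}}{|\xi|^2}=i\xi_j\widehat{v_i}-i\xi_i\widehat{v_j}.\]

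The bound $\nlpr{p}{\nabla\psi}\leq C(p)\nlpr{p}{v}$ is just the Calder\'on--Zygmund estimate, while Poincar\'e--Wirtinger on $B_R$ combined with the normalization $\int_{B_R}\psi_{ij}=0$ gives $\nlpc{p}{\psi_{ij}}\leq C_R\nlpr{p}{\nabla\psi_{ij}}\leq C(p,R)\nlpr{p}{v}$, which is \eqref{boundS}. The main technical obstacle is that $\psi$ itself is not globally in $L^p(\R^n)$---only its gradient is---so one cannot directly invert $\Delta$ by a Newton-potential convolution against a general $L^p$ datum, since the kernel decays too slowly. The normalization $\int_{B_R}\psi=0$ is precisely what pins down the otherwise ambiguous additive constant in the primitive of $(a^{(k)}_{ij})_k$, and that constant is where one must keep careful track throughout the mollification limit.
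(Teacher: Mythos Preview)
Your proof is correct and follows essentially the same approach as the paper: both construct the gradient $\nabla\psi_{ij}$ as products of Riesz transforms applied to $v$, verify it is curl-free, recover a primitive in $W^{1,p}\loc$, normalize on $B_R$, and conclude via Calder\'on--Zygmund and Poincar\'e. The only cosmetic differences are that the paper checks curl-freeness by testing against divergence-free fields and cites Galdi's book for the existence of the primitive, whereas you use a Fourier-symbol symmetry and a direct mollification/path-integral construction.
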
 
\begin{proof}
We start by proving the uniqueness of $\psi$. Let $\psi_1$ and $\psi_2$ verify \eqref{defpsil}. Then $\nabla (\psi_1-\psi_2)$ is harmonic and belongs to $L^p(\R^n)$, therefore it must vanish so $\psi_1-\psi_2=\mathcal{C}$ for some constant matrix $\mathcal{C}$. But $\int_{B_R}\mathcal{C}=\int_{B_R}\psi_1-\int_{B_R}\psi_2=0$, hence $\mathcal{C}=0$.

We show now the existence of $\psi$. The operators $\partial_i\partial_j\Delta^{-1}$ are bounded on $L^p(\R^n)$ as  products of the Riesz transforms. Therefore $w_{ij}=\nabla\Delta^{-1}(\partial_j v_i-\partial_i v_j)$ is well defined and belongs to $L^p(\R^n)$. Now, if $\varphi$ is a divergence free, smooth and compactly supported vector field then we clearly have that
$$
\int_{\R^n}w_{ij}\cdot\varphi
=\int_{\R^n}\nabla\Delta^{-1}(\partial_j v_i-\partial_i v_j)\cdot\varphi
=\int_{\R^n}(v_i\partial_j-v_j\partial_i)\Delta^{-1}\dive\varphi=0.
$$
We infer from \cite[Lemma III.1.1 and Corollary II.4.1]{galdi}) that there exists $\psi_{ij}\in W^{1,p}\loc(\R^n)$ such that $w_{ij}=\nabla\psi_{ij}$. Adding a constant if necessary, we can also assume that $\int_{B_R}\psi_{ij}=0$. Finally, we remark that $\Delta\psi_{ij}=\dive\nabla\psi_{ij}=\dive w_{ij}=\dive \nabla\Delta^{-1}(\partial_j v_i-\partial_i v_j)=\Delta\Delta^{-1}(\partial_j v_i-\partial_i v_j)=\partial_j v_i-\partial_i v_j$. Therefore $\psi$ has all properties stated in \eqref{defpsil}. Moreover, the bound $\nlpr p{\nabla \psi}\leq C\nlpr p{w}\leq C\nlpr pv$ is obvious. Finally, since  $\psi$ has vanishing mean on the ball $B_R$, we can apply the Poincaré inequality to obtain that $\nlpc p{\psi}\leq C\nlpc p{\nabla \psi}\leq C\nlpr pv$. This completes the proof.
\end{proof}

It is well-known that $L^{p,\infty}$ can be defined by the real interpolation method: $L^{p,\infty}_\sigma=(L_\sigma^{p_0},L^{p_1}_\sigma)_{\theta,\infty}$, where $p_0<p<p_1$ and $1/p=(1-\theta)/p_0+\theta/p_1$. Since the application $v\mapsto \psi$ is obviously linear, the theory of the real interpolation implies that $\psi$ is well defined for $v\in L^{p,\infty}(\R^n)$ and relation \eqref{boundS} remains true in weak $L^p$-spaces:
\begin{equation}\label{boundSlorentz}
\|\psi\|_{L^{p,\infty}(B_{R})}+\nlpr {p,\infty}{\nabla \psi}\leq C\nlpr {p,\infty}v
\end{equation}
for all $v\in L^{p,\infty}(\R^n)$.

Observe next that if $v$ is also divergence free, then 
\begin{equation*} 
v=\dive\psi  
\end{equation*}
where $\psi$ is defined in Lemma \ref{defpsic} and the divergence of the matrix $\psi$ is computed along its rows. Remark also that the divergence of a  skew-symmetric matrix is a divergence free vector field.

Now,  let $f$ be a smooth cut-off function from $\R^n$ to $\R_+$ such that $f$ vanishes for $|x|<R/2$ and $f(x)=1$ for $|x|>2R/3$.
In this work, we consider initial data for the exterior domain problem obtained as the cut-off of the matrix $\psi_0$ associated to $v_0$  as in Lemma \ref{defpsic}:
  \begin{equation*}
    \vb_0=\dive(f\psi_0).
  \end{equation*}
Since $\psi_0$ is skew-symmetric, we have  $\dive \vb_0=0$. Moreover, from the localization property of the cut-off $f$, we infer that $\vb_0$ vanishes on the boundary $\Gamma$ and  $\vb_0=v_0$ for $|x|>R$. We also have the following lemma.

\begin{lemma}\label{lem:mapping}
The mapping $v_0\mapsto \vb_0$ 
 is bounded from 
$L^p_\sigma(\R^n)$ into $L^p_\sigma(\Om)$ for each  $1<p<\infty$ as well as  from 
$L^{n,\infty}_\sigma(\R^n)$ into $L^{n,\infty}_\sigma(\Om)$. 
\end{lemma}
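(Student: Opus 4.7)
The plan is to expand $\bar v_0 = \dive(f\psi_0)$ using the product rule and the key identity $v_0 = \dive \psi_0$ (valid whenever $v_0$ is divergence free), which gives
\begin{equation*}
\bar v_0 = f\,\dive \psi_0 + \psi_0\,\nabla f = f v_0 + \psi_0\,\nabla f.
\end{equation*}
This decomposition reduces everything to two elementary estimates. First, since $f$ is bounded and $\Om\subset \R^n$, the term $fv_0$ is trivially controlled in $L^p(\Om)$ (and in $L^{n,\infty}(\Om)$) by $\|v_0\|_{L^p(\R^n)}$ (resp.\ $\|v_0\|_{L^{n,\infty}(\R^n)}$). Second, $\nabla f$ is smooth and supported in the compact annulus $\{R/2\leq|x|\leq 2R/3\}\subset B_R$, so
\begin{equation*}
\|\psi_0\,\nabla f\|_{L^p(\Om)} \leq \|\nabla f\|_{L^\infty(\R^n)} \|\psi_0\|_{L^p(B_R)} \leq C\|v_0\|_{L^p(\R^n)}
\end{equation*}
by the bound \eqref{boundS} of Lemma~\ref{defpsic}. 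The identical argument using \eqref{boundSlorentz} in place of \eqref{boundS} handles the weak $L^n$ case.

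Once the norm estimate is established, I need to justify membership in the solenoidal subspace $L^p_\sigma(\Om)$. This is already essentially done above the lemma: $\bar v_0$ is divergence free because $\psi_0$ is skew-symmetric, and $\bar v_0$ vanishes on $\Gamma$ (in particular it is tangent to $\Gamma$) thanks to the localisation of $f$. For $1<p<\infty$ in an exterior domain with smooth boundary, these two properties together with $\bar v_0\in L^p(\Om)$ characterise $L^p_\sigma(\Om)$, so the mapping indeed lands in the right space. For the $L^{n,\infty}_\sigma(\Om)$ case, the paper's definition of $L^{p,\infty}_\sigma$ is precisely \textit{divergence free, tangent to the boundary, in $L^{p,\infty}$}, so the conclusion is immediate.

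The one technical point that needs a sentence of care is the identity $v_0=\dive\psi_0$ when $v_0$ is only in $L^p(\R^n)$ or $L^{n,\infty}(\R^n)$. I would verify it by noting that componentwise $\Delta(\partial_j\psi_{ij}) = \partial_j(\partial_j v_i - \partial_i v_j) = \Delta v_i$ after using $\dive v_0=0$, and then appealing to uniqueness of tempered harmonic functions with appropriate decay (in the same spirit as the uniqueness argument in Lemma~\ref{defpsic}). This is the only step requiring any thought; the rest is routine. I do not anticipate a genuine obstacle, since the $L^{n,\infty}$ case is already prepared for by the interpolation remark \eqref{boundSlorentz} preceding the lemma.
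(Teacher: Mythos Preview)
Your proof is correct and follows essentially the same route as the paper: the same decomposition $\bar v_0=fv_0+\psi_0\nabla f$ together with the bound \eqref{boundS} gives the $L^p$ estimate. For the $L^{n,\infty}$ case the paper interpolates the whole (linear) map $v_0\mapsto\bar v_0$ rather than invoking \eqref{boundSlorentz} term by term, but this is an immaterial difference; also, your concern about the identity $v_0=\dive\psi_0$ is unnecessary, as it is stated explicitly in the paper just before the lemma.
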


\begin{proof}
Since $\vb_0=fv_0+\psi_0\nabla f$, where $\nabla f$ is supported in the ball $B_R$, we may
 use inequality \eqref{boundS} to obtain the estimate 
\begin{equation*}
\nlp p{\vb_0}\leq \nlpr p{v_0}+ \|\nabla f\|_{L^\infty}\nlpc p{\psi_0} \leq C\nlpr p{v_0}.
\end{equation*}
Therefore, the operator $v_0\mapsto \vb_0$ is linear and bounded from $L^p_\sigma(\R^n)$ into $L^p_\sigma(\Om)$ for each $1<p<\infty$. By  interpolation, it is also bounded from $L^{n,\infty}_\sigma(\R^n)$ into $L^{n,\infty}_\sigma(\Om)$. 
\end{proof}

Let us prove that both $\vb_0$ and $u_0$ chosen as in \eqref{defproj} are comparable at infinity to $v_0$.

\begin{lemma}\label{restrictions}
Let $v_0\in L^{n,\infty}(\R^n)$ be divergence free
and construct the matrix $\psi_0$ from $v_0$ as in Lemma \ref{defpsic}.
Then, all three vector fields
$$
v_0\bigl|_\Om, \qquad \ppom(v_0\bigl|_\Om),\qquad  \dive(f\psi_0)
\qquad \text{are comparable at infinity.}
$$
More precisely, the differences 
$$
 v_0\bigl|_\Om-\ppom(v_0\bigl|_\Om) \quad\text{and}\quad 
v_0\bigl|_\Om -\dive(f\psi_0) \quad\text{belong to}\quad  L^q(\Om)
$$
for each $q\in(1,n)$.   
\end{lemma}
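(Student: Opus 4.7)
The plan is to prove the two comparability statements in turn, treating the cutoff version $\dive(f\psi_0)$ first and then bootstrapping to deduce the statement for $\ppom(v_0\bigl|_\Om)$.

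\textbf{Step 1:} \emph{The difference $v_0\bigl|_\Om - \dive(f\psi_0)$ lies in $L^q(\Om)$ for every $q\in(1,n)$.} Using the product rule together with the identity $\dive\psi_0 = v_0$ (already noted in the paragraph preceding Lemma~\ref{lem:mapping}), I get $\dive(f\psi_0) = f\,v_0 + \psi_0\,\nabla f$, so
$$
v_0\bigl|_\Om - \dive(f\psi_0) = (1-f)\,v_0\bigl|_\Om - \psi_0\,\nabla f.
$$
Because both $1-f$ and $\nabla f$ vanish outside $B_{2R/3}$, this remainder is supported in the bounded set $\Om\cap\overline{B_{2R/3}}$. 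The hypothesis gives $v_0\in L^{n,\infty}(\R^n)$ and the Lorentz-space estimate \eqref{boundSlorentz} gives $\psi_0\in L^{n,\infty}(B_R)$. On any set of finite measure, $L^{n,\infty}$ embeds continuously into $L^q$ for every $q\in[1,n)$ by a standard layer-cake argument; hence the difference lies in $L^q(\Om)$ for every $q\in(1,n)$.

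\textbf{Step 2:} \emph{The difference $v_0\bigl|_\Om - \ppom(v_0\bigl|_\Om)$ lies in $L^q(\Om)$ for every $q\in(1,n)$.} Set $w:=v_0\bigl|_\Om - \dive(f\psi_0)$, which by Step~1 belongs to $L^q(\Om)$ for all $q\in(1,n)$. By Lemma~\ref{lem:mapping} the field $\dive(f\psi_0)$ already lies in $L^{n,\infty}_\sigma(\Om)$, and extending $\ppom$ from $L^p(\Om)$ to $L^{p,\infty}(\Om)$ for $1<p<\infty$ by real interpolation, $\ppom$ acts as the identity on $L^{n,\infty}_\sigma(\Om)$; in particular $\ppom\bigl(\dive(f\psi_0)\bigr) = \dive(f\psi_0)$. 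Linearity of $\ppom$ then yields
$$
v_0\bigl|_\Om - \ppom(v_0\bigl|_\Om) = w - \ppom w,
$$
and since $\ppom$ is bounded on $L^q(\Om)$ for every $q\in(1,\infty)$, the right-hand side belongs to $L^q(\Om)$ for every $q\in(1,n)$, as required.

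\textbf{Main obstacle.} The subtle point is the interpretation of $\ppom(v_0\bigl|_\Om)$ itself: the field $v_0\bigl|_\Om$ sits only in $L^{n,\infty}(\Om)$, while the Leray projection is stated in the paper only on $L^p(\Om)$ with $1<p<\infty$. The remedy is either to extend $\ppom$ to weak $L^n$ by real interpolation between two strong $L^{p_i}$ endpoints (and verify it acts as the identity on the divergence-free, boundary-tangent subspace), or, more concretely, to use the decomposition above as the very definition of $\ppom(v_0\bigl|_\Om)$, since $\dive(f\psi_0)$ is automatically its own Leray projection and the remainder $w$ lies in a standard $L^q$ space on which $\ppom$ is classical.
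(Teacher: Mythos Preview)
Your argument is correct and follows essentially the same route as the paper: first show that $v_0-\dive(f\psi_0)$ is supported in $B_R$ and lies in $L^{n,\infty}\subset L^q$ there, then apply $\ppom$ to this difference, use $\ppom\dive(f\psi_0)=\dive(f\psi_0)$, and combine. Your reformulation $v_0-\ppom v_0=w-\ppom w$ and the accompanying remark on interpreting $\ppom$ on $L^{n,\infty}$ are a nice touch that the paper leaves implicit.
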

\begin{proof}
To show that $v_0-\dive(f\psi_0)\in L^q(\Om)$  for each $q\in(1,n)$, we observe that 
$v_0-\dive(f\psi_0)=0$ for $|x|>R$ (due to properties of the cut-off $f$) and that 
$v_0-\dive(f\psi_0)\in L^{n,\infty}(B_R)$ (see relation \eqref{boundSlorentz}). 
Next, it suffices to use the imbedding $L^{n,\infty}(B_R) \subset L^{q}(B_R)$ for each $q\in(1,n)$.

Finally, using that the Leray projector is bounded in $L^q$, we have  that $\ppom(v_0-\dive(f\psi_0))\in L^q(\Om)$  for all $q\in(1,n)$. But $\dive(f\psi_0)$ is divergence free and vanishes on the boundary, so $\ppom\dive(f\psi_0)=\dive(f\psi_0)$. We infer that $\dive(f\psi_0)-\ppom v_0\in L^q(\Om)$  for all $q\in(1,n)$. This completes the proof.
\end{proof}

We conclude this section by recalling a stability result concerning  the large time behaviour of  solutions of the Navier-Stokes system in an exterior domain.

\begin{theorem}\label{stability}
Let $u_0, \widetilde u_0\in L^{n,\infty}_\sigma (\Om)$.
 There exists $\ep>0$ such that if $\|u_0\|_{L^{n,\infty}(\Om)}<\ep$ and   $\|\widetilde u_0\|_{L^{n,\infty}(\Om)}<\ep$, then the global small solutions $u$ and $\widetilde u$ of the exterior Navier-Stokes problem 
 \eqref{nsext} with  initial data $u_0$ and  $\widetilde u_0$ verify the following stability result. We have 
\begin{equation*}
t^{\frac12-\frac n{2p}}\nlp p{u(t)-\widetilde u(t)}\to 0  
\qquad\text{as}\quad t\to\infty \quad \forall p\in (n,\infty)
\end{equation*}
 if and only if 
\begin{equation*}
t^{\frac12-\frac n{2p}}\nlp p{e^{-tA}(u_0-\widetilde u_0)}\to 0  
\qquad\text{as}\quad t\to\infty\quad \forall p\in (n,\infty).
\end{equation*}
\end{theorem}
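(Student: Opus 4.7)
The plan is to pass through the mild (Duhamel) formulation of both Navier-Stokes problems. Subtracting the Kato integral equations, the difference $h(t):=u(t)-\widetilde u(t)$ satisfies
\[
h(t)\;=\;H(t)\;-\;\int_0^t e^{-(t-s)A}\,\ppom\dive\bigl(u(s)\otimes h(s)+h(s)\otimes\widetilde u(s)\bigr)\,\d s,
\]
with $H(t):=e^{-tA}(u_0-\widetilde u_0)$. The whole argument consists in showing that the bilinear integral is, in the critical scaling $t^{\frac12-\frac{n}{2p}}\nlp{p}{\cdot}$, asymptotically negligible compared to either $h$ or $H$, so that the two decay conditions are equivalent.

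I would first work at the single exponent $p_0=2n$ with the Kato norm $\|f\|_X:=\sup_{t>0}t^{1/4}\nlp{2n}{f(t)}$. The standard Kato construction, based on \eqref{S1bis} and on the bilinear estimate \eqref{S3} (used with $q=n$, $p=2n$, and the Hölder bound $\nlp n{u\otimes v}\leq\nlp{2n}u\nlp{2n}v$), produces global small solutions with $\|u\|_X+\|\widetilde u\|_X\leq C\varepsilon$; the same bilinear estimate applied to the difference equation yields $\|h\|_X\leq\|H\|_X+C_4(\|u\|_X+\|\widetilde u\|_X)\|h\|_X$, so for $\varepsilon$ small one has $\|h\|_X\leq 2\|H\|_X$. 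The equivalence of decay at $p_0=2n$ is then obtained by splitting the nonlinear integral at the auxiliary time $\eta t$, $\eta\in(0,1)$. On $(0,\eta t)$, the kernel bound $(t-s)^{-3/4}\leq((1-\eta)t)^{-3/4}$ combined with the Kato decay of each factor yields, after multiplying by $t^{1/4}$, a contribution bounded by $C\eta^{1/2}(1-\eta)^{-3/4}(\|u\|_X+\|\widetilde u\|_X)\|h\|_X$, which vanishes as $\eta\to 0$. On $(\eta t,t)$, one factors out $\rho_h(\eta t):=\sup_{s\geq\eta t}s^{1/4}\nlp{2n}{h(s)}$ and computes the residual integral by the substitution $s=t\sigma$ to obtain a contribution bounded by $C_4(\|u\|_X+\|\widetilde u\|_X)\rho_h(\eta t)$. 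Taking $\limsup_{t\to\infty}$, using that $\rho_h(\eta t)\to\ell_h^*:=\limsup_{s\to\infty}s^{1/4}\nlp{2n}{h(s)}$ as $t\to\infty$, and then letting $\eta\to 0$, the identity $h=H-N$ gives the bound $\ell_h^*\bigl(1-C_4(\|u\|_X+\|\widetilde u\|_X)\bigr)\leq\ell_H^*$, and symmetrically the identity $H=h+N$ gives the reverse. For $\varepsilon$ small the coefficient $1-C_4(\|u\|_X+\|\widetilde u\|_X)$ is positive, so $\ell_h^*$ and $\ell_H^*$ vanish together.

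To extend the equivalence from $p_0=2n$ to every $p\in(n,\infty)$, I would proceed separately in two ranges. For $p\in(n,2n)$, a layer-cake / Hölder interpolation gives pointwise in $t$ the bound $\nlp p{h(t)}\leq C\nlpw n{h(t)}^\alpha\nlp{2n}{h(t)}^{1-\alpha}$ with $\alpha=2n/p-1$; since $\nlpw n{h(t)}$ is uniformly bounded by the Kato theory via \eqref{S1bis2}, a direct computation of exponents shows that the $t^{1/4}$-decay in $L^{2n}$ transfers to the required $t^{\frac12-\frac{n}{2p}}$-decay in $L^p$. For $p>2n$, I would use the time-shifted Duhamel formula $h(t)=e^{-(t/2)A}h(t/2)-\int_{t/2}^t e^{-(t-s)A}\ppom\dive(u\otimes h+h\otimes\widetilde u)\,\d s$: the linear piece decays correctly by \eqref{S1} with $q=2n$, while the integral from $t/2$ to $t$ is bounded, via \eqref{S3} with $q=n$, by a constant times $\sup_{s\geq t/2}s^{1/4}\nlp{2n}{h(s)}\to 0$. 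The same two steps, in their purely linear form, handle $H$. The main technical obstacle is the splitting argument at $p_0=2n$: because the Kato norms $\|u\|_X,\|\widetilde u\|_X$ need not themselves decay when $u_0,\widetilde u_0$ lie merely in $L^{n,\infty}$, one cannot appeal to dominated convergence on the integrand, and must instead exploit the smallness of these norms to absorb a term proportional to $\ell_h^*$ from the right-hand side.
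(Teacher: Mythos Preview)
The paper does not actually prove Theorem~\ref{stability}: immediately after the statement it records that the existence part is due to Kozono--Yamazaki~\cite{KY95} and the stability part is taken from~\cite{MR2143356}. So there is no proof in the paper to compare against. That said, the paper does reproduce the core of the argument from~\cite{MR2143356} later on, in the proof of Proposition~\ref{smallF}, and your sketch is essentially that argument.

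Two minor differences are worth flagging. First, in Proposition~\ref{smallF} the paper fixes a single exponent $p>2n$ (so that $q=p/2\geq n$ feeds directly into~\eqref{S3}) and reduces to that exponent by the log-convexity inequality $\nlp{p_2}{\cdot}\leq\nlp{p_1}{\cdot}^\alpha\nlp{p_3}{\cdot}^{1-\alpha}$ between two exponents in $(n,\infty)$; you instead anchor at $p_0=2n$ with $q=n$, then interpolate against the uniform $L^{n,\infty}$ bound for $p\in(n,2n)$ and bootstrap via the semigroup for $p>2n$. Both routes work; the paper's is slightly shorter because the interpolation step is a one-liner. Second, your $\eta$-splitting of the bilinear integral is not needed. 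After the change of variables $s=t\tau$ the integrand is dominated by a fixed integrable function of $\tau$ (the Kato norm of $h$ is bounded), and reverse Fatou gives
\[
\limsup_{t\to\infty}\int_0^1(1-\tau)^{-\frac12-\frac{n}{2p}}\tau^{-1+\frac{n}{p}}h(t\tau)\,d\tau
\;\leq\;\Bigl(\limsup_{t\to\infty}h(t)\Bigr)\int_0^1(1-\tau)^{-\frac12-\frac{n}{2p}}\tau^{-1+\frac{n}{p}}\,d\tau
\]
directly---this is exactly what the paper does in Proposition~\ref{smallF}. So your closing remark that ``one cannot appeal to dominated convergence on the integrand'' is not quite right: it is precisely (the $\limsup$ form of) dominated convergence that~\cite{MR2143356} uses. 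Your splitting argument is a valid alternative, just a little longer.
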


In this theorem, 
the global existence of the solutions was proved by  Kozono \& Yamazaki \cite{KY95} and the stability part is shown in \cite{MR2143356}.

Thus, in particular, 
 as long as the initial datum $v_0\in L^{n,\infty}(\R^n)$ is small,  Lemma \ref{restrictions} combined with Corollary \ref{cor:dec} and Theorem \ref{stability}
imply that the Navier-Stokes solutions in the exterior domain $\Omega$,
supplemented with the initial data  
$ \P (v_0\bigl|_{\Om})$ and $\dive(f\psi_0)$, have the same large time asymptotics.

An analogous result on the large time behaviour of solutions of semilinear parabolic equations with a scaling property was proved in \cite{MR1689406} in the case of the whole space $\R^n$.

\section{Asymptotics for small data in weak $L^n$-spaces}
\label{sect:withsmallu}

\subsection*{Statement of the  results}

Now we  show that, for  certain small initial conditions  in the whole space
   $\R^ n $ and in the exterior domain $\Omega$, the corresponding solutions of the Navier-Stokes system have the same large time asymptotics.

 Let $v_0\in L^{ n ,\infty}_\sigma (\R^ n )$ be  sufficiently small and
denote by  $v$  the unique global-in-time solution of the Navier-Stokes equations  in $\R^ n $ with  initial velocity $v_0$:
\begin{align}\label{veq}
\partial_t v-\Delta v+v\cdot\nabla v&=-\nabla p \qquad\text{for}\quad x\in\R^ n,\\
\dive v&=0,\\
v(0,\cdot)&=v_0.\label{veq-ini}   
\end{align}

We denote by $\psi_0$ 
the skew-symmetric matrix  
constructed as in Lemma \ref{defpsic}. In particular,
$v_0=\dive\psi_0$ and  $\int_{B_R}\psi_0(x)\,dx=0$.
Then, the vector field 
\begin{equation}\label{vb0}
  \vb_0=\dive(f\psi_0),
\end{equation}
where $f$ is our fixed cut-off function,
is divergence free and vanishes on $\Gamma=\partial\Omega$. 
Moreover, 
if $v_0$ is small in $L^{n,\infty}_\sigma(\R^n)$, then 
$u_0=\vb_0\bigl|_\Om	$ is also small in $L^{n,\infty}_\sigma (\Om),$
as an immediate consequence of Lemma \ref{lem:mapping}.

We are going to compare the large time behaviour of the solution $v=v(t,x)$ 
of the whole space problem \eqref{veq}-\eqref{veq-ini}
with the solution  $u=u(t,x)$  of the following exterior 
Navier-Stokes problem 
\begin{align}
\partial_t u-\Delta u+u\cdot\nabla u&=-\nabla q 
\qquad\text{for}\quad x\in\Omega,\label{ueq}\\
\dive u&=0,\\
u(0,\cdot)&=u_0\equiv  \vb_0\bigl|_\Om,\label{ueq-ini}    
\end{align}
where $\vb_0$ is defined in \eqref{vb0}.

If $\|v_0\|_{L^{n,\infty}(\R^n)}$ is sufficiently small, say
$\|v_0\|_{L^{n,\infty}(\R^n)}\leq\ep_1$ for suitable small $\ep_1>0$, then 
a solution $v=v(x,t)$ of the Cauchy problem \eqref{veq}-\eqref{veq-ini}  
exists globally-in-time 
and for every $n<p<\infty$ it satisfies  the following bounds
\begin{equation}\label{ineq:decay}
\begin{split}
\nlpr p{v(t)}\leq C\ep_1 t^{\frac n{2p}-\frac12}, &\qquad\nlpr p{\nabla v(t)}\leq C\ep_1 t^{\frac n{2p}-1}, \\
\nlpr p{\partial_tv(t)}\leq C\ep_1 t^{\frac n{2p}-\frac32},&\qquad
\nlpr p{\Delta v(t)}\leq C\ep_1 t^{\frac n{2p}-\frac32}, 
\end{split}
\end{equation}
for all $t>0$
(see {\it e.g.}~\cite{GS03}). The constant $C$ depends on $p$ but not on $\ep_1$.
Moreover, since by Lemma \ref{lem:mapping}, there exists a constant $C>0$ such that 
$\nlp p{u_0} =\nlp p{\vb_0}\leq C\varepsilon_1$, 
for sufficiently small $\ep_1>0$ there exists a global-in-time solution $u=u(x,t)$ of the exterior 
problem \eqref{ueq}-\eqref{ueq-ini} which satisfies the estimate
\begin{equation*}
 \nlp p{u(t)}\leq C\ep_1 t^{\frac n{2p}-\frac12}
\end{equation*}
for some constant $C=C(p)>0$ and for all $t>0$ (see \cite{KY95}).

In the main theorem of this section, we  show that both solutions 
$u$ and $v$ have the same large time behaviour.

\begin{theorem}\label{withsmallu}
There exists a constant $\ep_1>0$ with the following property. Let $v_0\in L^{n,\infty}_\sigma(\R^n)$ with $\|v_0\|_{L^{n,\infty}(\R^n)}\leq\ep_1$ and define $u_0=\vb_0\bigl|_\Om$ obtained from $v_0$ 
via  the cut-off procedure  \eqref{vb0}. 
Then, there exist  $v=v(t,x)$ and $u=u(t,x)$  unique global-in-time solutions of the Cauchy problem
\eqref{veq}-\eqref{veq-ini} and of
the exterior problem \eqref{ueq}-\eqref{ueq-ini}, respectively. Moreover, for every $ n <p<\infty$ we have 
\begin{equation*}
\lim_{t\to\infty}t^{\frac12-\frac{n}{2p}}\nlp p{u(t)-v(t)}=0  .
\end{equation*}
\end{theorem}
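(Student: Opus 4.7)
The natural object linking the two problems is the \emph{time-dependent cut-off} $\vb(t):=\dive(f\psi(t))$, where $\psi(t)$ is the skew-symmetric matrix associated to $v(t)$ by Lemma~\ref{defpsic}. By construction $\vb(t)$ is divergence-free in $\R^n$, vanishes on $\Gamma$, coincides with $v(t)$ on $|x|>R$, and has initial value exactly $\vb_0$. My plan is to exploit the triangle inequality
\[
\nlp p{u(t)-v(t)}\leq\nlp p{u(t)-\vb(t)}+\nlp p{\vb(t)-v(t)}
\]
and estimate each piece in the weighted norm $t^{\frac12-\frac{n}{2p}}\nlp p{\cdot}$.

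The second piece is localized: $\vb-v=(f-1)v+\psi\,\nabla f$ is supported in $B_R\cap\Omega$. On this bounded set I would dominate the $L^p$-norm by an $L^q$-norm with $q>p$ and apply \eqref{boundS} to get $\|\psi(t)\|_{L^q(B_R)}\leq C\nlpr q{v(t)}$. The higher-exponent decay \eqref{ineq:decay} then yields $t^{\frac12-\frac{n}{2p}}\nlp p{\vb(t)-v(t)}\leq C\varepsilon_1\,t^{\frac{n}{2q}-\frac{n}{2p}}\to 0$ as $t\to\infty$ for any fixed $q>p$.

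For the main piece, I would set $W:=u-\vb$, so that $W(0)=0$, $\dive W=0$ and $W|_\Gamma=0$. Differentiating $\vb=\dive(f\psi)$ in $t$ and using the $\R^n$ Navier-Stokes equations for $v$, a direct computation shows that $\vb$ solves a perturbed Navier-Stokes system in $\Omega$ of the form
\[
\partial_t\vb-\Delta\vb+\dive(\vb\otimes\vb)+\nabla\tilde p=\dive H,
\]
where $H=H_{\rm loc}+f\,\tilde\psi$: the tensor $H_{\rm loc}$ involves $\nabla f$ and $\Delta f$ and is spatially supported in $B_R$, while $\tilde\psi$ is the skew-symmetric matrix associated through Lemma~\ref{defpsic} to the vector field $-\dive(v\otimes v)=-v\cdot\nabla v$. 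Subtracting the NS equation for $u$, projecting with $\ppom$, and applying Duhamel gives
\[
W(t)=-\int_0^t e^{-(t-s)A}\ppom\dive\bigl(W\otimes u+\vb\otimes W\bigr)(s)\,\d s-\int_0^t e^{-(t-s)A}\ppom\dive H(s)\,\d s.
\]

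To close the estimate I would work in the Banach space $X_p:=\{W:\sup_{t>0}t^{\frac12-\frac{n}{2p}}\|W(t)\|_{L^p(\Omega)}<\infty\}$. The bilinear term is standard: by \eqref{S3} together with a Lorentz-space H\"older inequality (Kozono-Yamazaki \cite{KY95}), it is majorised by $C\bigl(\|u\|_{L^{n,\infty}}+\|\vb\|_{L^{n,\infty}}\bigr)\sup_s s^{\frac12-\frac{n}{2p}}\|W(s)\|_{L^p(\Omega)}$, which is absorbed by the smallness of $\varepsilon_1$ together with Lemma~\ref{lem:mapping}. The localized part of the source $H_{\rm loc}$ delivers a vanishing contribution exactly as in the second piece. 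The hard part will be the quadratic component $f\,\tilde\psi$, which is not compactly supported; here I would estimate $\|\tilde\psi\|_{L^q(B_R)}+\|\nabla\tilde\psi\|_{L^q}\leq C\|v\cdot\nabla v\|_{L^q}\leq C\|v\|_{L^{2q}}\|\nabla v\|_{L^{2q}}$ via \eqref{boundS}, plug into \eqref{S3}, and use the product decay from \eqref{ineq:decay} to extract a rate strictly better than $t^{\frac{n}{2p}-\frac12}$. This balancing between the scaling-critical exponents in \eqref{S3} and the genuine nonlinear decay is the technical core of the proof; once the source contribution is shown to be $o\bigl(t^{\frac{n}{2p}-\frac12}\bigr)$, combining it with the absorbable bilinear estimate and $W(0)=0$ yields the desired conclusion $t^{\frac12-\frac{n}{2p}}\|W(t)\|_{L^p(\Omega)}\to 0$.
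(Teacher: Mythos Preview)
Your overall architecture---introducing $\vb(t)=\dive(f\psi(t))$, reducing to $W=u-\vb$, absorbing the bilinear term by smallness, and showing the source contribution is $o(t^{n/(2p)-1/2})$---is exactly the paper's scheme, and the treatment of the compactly supported difference $\vb-v$ is identical.

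The gap is in your handling of the source term. You write the forcing in divergence form $\dive H$ and isolate a non-compactly-supported piece $f\tilde\psi$, then propose to feed it into \eqref{S3}. But \eqref{S3} requires $\|f\tilde\psi\|_{L^q(\Omega)}$ with $q\geq n$, and Lemma~\ref{defpsic} gives you only $\|\tilde\psi\|_{L^q(B_R)}$ and $\|\nabla\tilde\psi\|_{L^q(\R^n)}$---never $\|\tilde\psi\|_{L^q(\R^n)}$ (indeed $\tilde\psi$ is normalised only by its mean on $B_R$, so no global $L^q$ control is available). If instead you expand $\dive(f\tilde\psi)=f\dive\tilde\psi+\tilde\psi\,\nabla f$ and treat the non-local part $f\dive\tilde\psi$ via \eqref{S1}, the decay $\|\nabla\tilde\psi(s)\|_{L^q}\lesssim s^{n/(2q)-3/2}$ integrates to exactly the critical rate $t^{n/(2p)-1/2}$, with no margin; the claimed $o(\cdot)$ does not follow.

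The paper avoids this by \emph{not} forcing the source into divergence form. Keeping the cut-off pressure $fp$ as the pressure for the $W$-equation, one obtains
\[
\partial_t W-\Delta W+W\cdot\nabla u+\vb\cdot\nabla W=-\nabla(q-fp)-F,
\quad
F=p\nabla f+\partial_t\psi\,\nabla f+(f\Delta v-\Delta\vb)+(\vb\cdot\nabla\vb-fv\cdot\nabla v),
\]
and each of the four pieces of $F$ is supported in $B_R$ (the last because $\vb=v$ for $|x|>R$). The source is then estimated through \eqref{S1} rather than \eqref{S3}: since $F$ lives on a bounded set, any $L^r$-norm controls any $L^q$-norm with $q\leq r$, and this freedom to take $q$ strictly below $r$ (in particular $q<n$) is precisely what produces a rate strictly better than critical. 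Your $f\tilde\psi$ is an artefact of the divergence-form rewriting; after the right bookkeeping with the pressure and the nonlinearity, the ``hard part'' you anticipate is in fact local, and the difficulty does not arise.
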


We might want to have a similar result for other initial conditions
$u_0$. This is easily obtained via the stability result stated in Theorem  \ref{stability}. For example, the same result holds true if we choose $u_0=\P(v_0\bigl|_\Om)$ instead of $u_0=\vb_0\bigl|_\Om$. Indeed, by Lemma \ref{restrictions} and the decay estimates for the Stokes operator given in 
\eqref{S1}, we have that 
\[
\lim_{t\to\infty}t^{\frac12-\frac n{2p}}\nlp p{e^{-tA}(\P(v_0\bigl|_\Om)-\dive(f\psi_0))}= 0 
\]
for every $p>n.$
Moreover, if  $\| v_0 \|_{L^{n,\infty}(\R^n)}\leq \ep$,
by the continuity of the Leray projector, we obtain that
 $\nlpw n{ \P(v_0\bigl|_\Om) }\leq C\ep$.
Hence, if $\varepsilon>0$ is sufficiently small, it suffices to apply
Theorem  \ref{stability}.

Conversely, we might want to fix $u_0$ and to construct $v_0$ instead of the other way round. This is also made possible by our results.   Given $u_0\in L^{n,\infty}_\sigma(\Omega)$, we may choose 
 $v_0$ to be the extension of $u_0$ to $\R^n$ with zero values outside $\Om$ (or any other extension that preserves the divergence free condition and the smallness of the $L^{n,\infty}$-norm). 
Indeed, for such an extension  we have  that  $u_0=\P(v_0\bigl|_\Om)$ and the above applies provided that $u_0$ is sufficiently small in $L^{n,\infty}$.

In fact, this property 
 can be applied to all initial data comparable at infinity. More precisely, we have the following result.

\begin{theorem}\label{corwithsmallu} Let $v_0\in L^{n,\infty}_{\sigma}(\R^n)$ and  $u_0\in L^{n,\infty}_{\sigma}(\Om)$ be comparable at infinity. 
There exists $\ep_1>0$ such that if $\|v_0\|_{L^{n,\infty}(\R^n)}\leq\ep_1$ and $\|u_0\|_{L^{n,\infty}(\Om)}\leq\ep_1$, then for every $p\in  (n, \infty)$, the corresponding solutions 
of the Cauchy problem
\eqref{veq}-\eqref{veq-ini} and 
the exterior problem \eqref{ueq}-\eqref{ueq-ini}, respectively,
satisfy
$
\lim\limits_{t\to\infty}t^{\frac12-\frac{n}{2p}}\nlp p{u(t)-v(t)}=0  .
$
\end{theorem}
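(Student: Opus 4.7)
The plan is to route the comparison through an intermediate exterior solution whose initial datum comes from $v_0$ by the cut-off construction of Section~\ref{sec:linear}. This reduces Theorem~\ref{corwithsmallu} to Theorem~\ref{withsmallu} together with the stability statement of Theorem~\ref{stability}, the only new ingredient being that the difference of the two exterior initial data decays appropriately under the Stokes semigroup.

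Produce the skew-symmetric matrix $\psi_0$ from $v_0$ via Lemma~\ref{defpsic}, put $\vb_0=\dive(f\psi_0)$, and set $\widetilde u_0=\vb_0\bigl|_\Om$. By Lemma~\ref{lem:mapping}, $\|\widetilde u_0\|_{L^{n,\infty}(\Om)}\leq C\|v_0\|_{L^{n,\infty}(\R^n)}\leq C\ep_1$. Shrinking $\ep_1$ once if necessary, Theorem~\ref{withsmallu} yields a global solution $\widetilde u$ of \eqref{ueq}--\eqref{ueq-ini} with initial datum $\widetilde u_0$ satisfying $t^{\frac12-\frac{n}{2p}}\nlp p{\widetilde u(t)-v(t)}\to 0$ for every $p\in(n,\infty)$, so by the triangle inequality it suffices to derive the same decay for $u-\widetilde u$. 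Since both $\|u_0\|_{L^{n,\infty}(\Om)}$ and $\|\widetilde u_0\|_{L^{n,\infty}(\Om)}$ lie below the stability threshold for $\ep_1$ small enough, Theorem~\ref{stability} reduces the claim to
\begin{equation*}
t^{\frac12-\frac{n}{2p}}\nlp p{e^{-tA}(u_0-\widetilde u_0)}\longrightarrow 0 \qquad\text{as }t\to\infty.
\end{equation*}

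For this remaining Stokes estimate, split
\begin{equation*}
u_0-\widetilde u_0 = \underbrace{(u_0-v_0\bigl|_\Om)}_{=:a} + \underbrace{(v_0\bigl|_\Om-\vb_0\bigl|_\Om)}_{=:b}.
\end{equation*}
By the \emph{comparable at infinity} hypothesis, $a\in L^{q_0}(\Om)$ for some $q_0\in(1,n]$, while Lemma~\ref{restrictions} gives $b\in L^q(\Om)$ for every $q\in(1,n)$. Since $u_0-\widetilde u_0\in L^{n,\infty}_\sigma(\Om)$ and $\ppom$ is bounded on every $L^r$, $1<r<\infty$, we have $u_0-\widetilde u_0=\ppom a+\ppom b$ with $\ppom a\in L^{q_0}_\sigma(\Om)$ and $\ppom b\in L^q_\sigma(\Om)$. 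Fix any $q\in(1,n)$; applying Corollary~\ref{cor:dec} termwise (with $p>n\geq q_0$ and $p>n>q$) yields
\begin{equation*}
t^{\frac12-\frac{n}{2p}}\nlp p{e^{-tA}\ppom a}=o\bigl(t^{\frac12-\frac{n}{2q_0}}\bigr),\qquad t^{\frac12-\frac{n}{2p}}\nlp p{e^{-tA}\ppom b}=o\bigl(t^{\frac12-\frac{n}{2q}}\bigr),
\end{equation*}
both of which tend to $0$ as $t\to\infty$ since $q_0\leq n$ and $q<n$. The one delicate point is the borderline case $q_0=n$: the bare Stokes bound \eqref{S1} then supplies exactly the rate $t^{\frac12-\frac{n}{2p}}$ with no extra decay factor, so one genuinely needs the ``little~$o$'' refinement of Corollary~\ref{cor:dec} (which itself rests on density of $C^\infty_c$-data) rather than the sharp estimate of Proposition~\ref{ellpest}. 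Combining the two displays with the triangle inequality completes the argument.
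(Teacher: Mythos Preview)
Your proof is correct and follows essentially the same route as the paper: introduce the intermediate exterior solution $\widetilde u$ with initial datum $\vb_0\bigl|_\Om$, apply Theorem~\ref{withsmallu} to handle $\widetilde u-v$, and then invoke Theorem~\ref{stability} for $u-\widetilde u$ after splitting $u_0-\widetilde u_0=\ppom(u_0-v_0\bigl|_\Om)+\ppom(v_0\bigl|_\Om-\vb_0\bigl|_\Om)$ via Lemma~\ref{restrictions} and Corollary~\ref{cor:dec}. You even single out the borderline case $q_0=n$ in the same way the paper does.
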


\begin{proof} 

Let us denote by $\widetilde u$ the solution of the Navier-Stokes equations on $\Om$ with initial velocity $\widetilde u_{0}= \bar v_{0}\vert_{\Om}$, where $\bar v_{0}$ is constructed in \eqref{vb0}.  
Using Lemma \ref{lem:mapping} we have  $\|\widetilde u_0\|_{L^{n,\infty}(\Om)}<C\ep$. 
Hence, if $\ep_1>0$ is sufficiently small,
by Theorem \ref{withsmallu}, we obtain that
$ \lim\limits_{t\to\infty}t^{\frac12-\frac{n}{2p}}\nlp p{\widetilde u(t)-v(t)}=0$
for each $p\in (n,\infty)$.

Next, we use the decomposition
\[
u_{0}- \widetilde u_{0}= \P(u_{0}- \widetilde u_{0})=\P(u_{0}-v_{0}\bigl|_\Om)+\P(v_{0}\bigl|_\Om-\widetilde u_{0}).
\]
Since, by Definition \ref{def:eq}, 
we have that   $u_0-v_0 \in L^{q_{0}}(\Om)$ for some $q_{0}\in (1,n]$, we obtain
 $\P(u_{0}-v_{0}\bigl|_\Om)\in L^{q_{0}}(\Om)$. 
Moreover,  recalling that $\widetilde u_{0}=\vb_0\bigl|_\Om$  we have that 
$ \P(v_{0}\bigl|_\Om-\widetilde u_{0})\in L^q(\Om)$ for every $q\in (1,n)$. 
Hence, either by decay estimate \eqref{S1}
or by Corollary \ref{cor:dec} (if $q_{0}=n$), we get that
$
\lim\limits_{t\to\infty} t^{\frac12-\frac n{2p}}\nlp p{e^{-tA}(u_{0}- \widetilde u_{0})}= 0 .
$
Thus, if $\ep_1>0$  is sufficiently small, we may apply 
Theorem \ref{stability} to show $\lim\limits_{t\to\infty}t^{\frac12-\frac{n}{2p}}\nlp p{u(t)-\widetilde u(t)}=0 $.
The triangle inequality completes the proof of Theorem \ref{corwithsmallu}.
\end{proof}

\subsection*{Proof of Theorem \ref{withsmallu}}
The remainder  of this section is devoted to the proof of Theorem \ref{withsmallu}. 
Here, we use the auxiliary vector field $\vb=\dive (f\psi)$, where $\psi=\psi(x,t)$ is the skew-symmetric 
matrix  obtained from the solution $v=v(x,t)$ as in Lemma \ref{defpsic}. From \eqref{ineq:decay} we infer that
\begin{equation}\label{v:bar:v}
t^{\frac12-\frac{n}{2p}}\nlp p{\vb(t)}\leq Ct^{\frac12-\frac{n}{2p}}\nlpr p{v(t)}\leq C(p)\ep_1
\end{equation}
for all $t>0$, each $p>n$, and some constant $C(p)>0$.

To prove Theorem \ref{withsmallu},
it suffices to show that for each $p\in (n,\infty)$ we have that 
\begin{equation}\label{estw}
\lim_{t\to\infty}t^{\frac12-\frac{n}{2p}}\nlp p{w(t)}=0, \qquad\text{where} \quad w=u-\vb.
\end{equation}
Indeed, 
Theorem \ref{withsmallu} will be a direct consequence of the inequality
 $$\nlp p{u-v}\leq\nlp pw+\nlp p{v-\vb},$$ 
where the difference
 $v-\vb$ is compactly supported, so  $\nlp p{v-\vb}\leq C\nlp r{v-\vb}$ for all  $r>p$. 
Thus, using
 the $L^r$-decay estimate of $v$ given in \eqref{ineq:decay}
together with \eqref{v:bar:v}
we obtain for $r>p$
\begin{equation*}
t^{\frac12-\frac{n}{2p}}\nlp p{v(t)-\vb(t)}\leq   Ct^{\frac12-\frac{n}{2p}}\big(\nlp r{v(t)}+\nlp r{\vb(t)}\big)
\leq Ct^{\frac n{2r}-\frac{n}{2p}}
\to 0\quad \text{as}\quad t\to\infty.
\end{equation*}

To prove the limit in 
 \eqref{estw}, we note that    the vector field
$w=u-\vb$ verifies the following system
\begin{equation}\label{eqw}
\partial_t w-\Delta w+w\cdot\nabla u+\vb\cdot \nabla w= -\nabla(q-fp)-F  \quad\text{in}\quad \Omega,\qquad w\vert_{t=0}=0,
\end{equation}
where  the forcing term $F$ is given by the following expression
\begin{equation}\label{decF}
F=p\nabla f+\partial_t\psi\nabla f+(f\Delta v-\Delta \vb)+(\vb\cdot\nabla\vb-f v\cdot\nabla v)\equiv F_1+F_2+F_3+F_4.
\end{equation}
Moreover, $w$ is divergence free and vanishes on the boundary.

In the following proposition, we state that \eqref{estw} is a consequence of the decay properties of  $F$.

\begin{proposition}\label{smallF}
If $\ep_1>0$ is sufficiently small and if for each  $p\in(n,\infty)$
\begin{equation}\label{as:inf:F}
 \lim_{t\to\infty} t^{\frac12-\frac{n}{2p}}\nlp p{\int_0^t e^{-(t-s)A}\P F(s)\,ds}=0, 
\end{equation}
 then for each $p\in(n,\infty)$ we have that 
$
\lim\limits_{t\to\infty} t^{\frac12-\frac{n}{2p}}\nlp p{w(t)}=0. 
$
\end{proposition}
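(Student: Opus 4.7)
The plan is to rewrite \eqref{eqw} as a Duhamel integral equation, control the two bilinear terms via \eqref{S3}, and then extract the asymptotic smallness from a $\limsup$ argument that absorbs a small multiple of $H_\infty:=\limsup_{t\to\infty}t^{\frac12-\frac{n}{2p}}\|w(t)\|_{L^p(\Omega)}$ into the left-hand side.

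\textbf{Step 1 (integral formulation).} Since $w$ and $\vb$ are divergence free, the convective terms may be put in divergence form: $w\cdot\nabla u=\dive(u\otimes w)$ and $\vb\cdot\nabla w=\dive(w\otimes\vb)$. Applying $\ppom$ to \eqref{eqw} removes the pressure gradient; using $w(0)=0$, Duhamel's formula gives
$$w(t)=-\int_0^t e^{-(t-s)A}\ppom\dive\bigl(u\otimes w+w\otimes\vb\bigr)(s)\,ds-\int_0^t e^{-(t-s)A}\ppom F(s)\,ds.$$

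\textbf{Step 2 (bilinear estimate).} Fix $p\in(n,\infty)$ and set $h(s)=s^{\frac12-\frac{n}{2p}}\|w(s)\|_{L^p(\Omega)}$. Choose $q=n$ and let $a>n$ be defined by $\tfrac1q=\tfrac1a+\tfrac1p$; then \eqref{S3} applies with these indices. By Hölder, $\|u\otimes w\|_{L^q}\leq\|u\|_{L^a}\|w\|_{L^p}$ and analogously for $w\otimes\vb$. The decay bounds \eqref{ineq:decay} and \eqref{v:bar:v} yield $\|u(s)\|_{L^a}+\|\vb(s)\|_{L^a}\leq C\ep_1 s^{\frac{n}{2a}-\frac12}$. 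Combining these with \eqref{S3} and performing the scaling change of variables $s=t\sigma$, all $t$-powers cancel (this is the content of the scaling identity $\tfrac12-\tfrac{n}{2p}+\alpha+\tfrac{n}{2q}=0$ with $\alpha=-\tfrac12+\tfrac{n}{2p}-\tfrac{n}{2q}$), and one gets
$$t^{\frac12-\frac{n}{2p}}\Bigl\|\int_0^t e^{-(t-s)A}\ppom\dive(u\otimes w+w\otimes\vb)(s)\,ds\Bigr\|_{L^p(\Omega)}\leq C\ep_1 C_0\sup_{0<s<t}h(s),$$
where $C_0=\int_0^1(1-\sigma)^{-1+\frac{n}{2p}}\sigma^{-\frac12}\,d\sigma$ is finite since both exponents exceed $-1$ (this uses $p>n$).

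\textbf{Step 3 (a priori boundedness).} Because $w=u-\vb$, the small-data decay estimates $\|u(s)\|_{L^p}$, $\|\vb(s)\|_{L^p}\leq C\ep_1 s^{\frac{n}{2p}-\frac12}$ imply $\sup_{s>0}h(s)\leq 2C\ep_1<\infty$. Hence $H_\infty$ is finite, and by Step~2 the forcing contribution $t^{\frac12-\frac{n}{2p}}\|\int_0^t e^{-(t-s)A}\ppom F\,ds\|_{L^p(\Omega)}$ is also bounded on $(0,\infty)$.

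\textbf{Step 4 ($\limsup$ argument and conclusion.)} For a large threshold $T>0$, split the bilinear integral at $s=T$. The contribution of $[0,T]$ is bounded by $Ct^{\frac12-\frac{n}{2p}}(t-T)^{-\frac12+\frac{n}{2p}-\frac{n}{2q}}$ times a constant depending on $T$ (integrability at $0$ is ensured by $\tfrac{n}{2q}>0$), and hence decays like $t^{-\frac{n}{2q}}\to 0$. On $[T,t]$, the same change of variables $\sigma=s/t$ as in Step~2 gives, in the limit $t\to\infty$, the bound $C\ep_1 C_0\sup_{s\geq T}h(s)$. Letting first $t\to\infty$ and then $T\to\infty$, and invoking hypothesis \eqref{as:inf:F} to kill the forcing contribution, we obtain $H_\infty\leq C\ep_1 C_0 H_\infty$. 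If $\ep_1$ is chosen so small that $C\ep_1 C_0<1$, this forces $H_\infty=0$, which is the claim. The main obstacle is simply a bookkeeping one, namely ensuring that the exponents in \eqref{S3}, Hölder and the Beta-type integral are all simultaneously admissible; the choice $q=n$, $a=\tfrac{np}{p-n}>n$ achieves this for every $p\in(n,\infty)$.
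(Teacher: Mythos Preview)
Your approach is essentially the paper's: Duhamel formula, bilinear control through \eqref{S3}, and a $\limsup$ absorption. The cosmetic differences are that the paper takes $q=p/2$ (forcing $p>2n$) while you take $q=n$, and the paper passes to the limit via dominated convergence on $\int_0^1(1-\tau)^{-\frac12-\frac n{2p}}\tau^{-1+\frac np}h(t\tau)\,d\tau$ while you split at a threshold $T$; both are fine.

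There is, however, one genuine gap. Your absorption condition $C\ep_1C_0<1$ depends on $p$: indeed $C_0=\int_0^1(1-\sigma)^{-1+\frac n{2p}}\sigma^{-1/2}\,d\sigma=B\bigl(\tfrac n{2p},\tfrac12\bigr)$ blows up as $p\to\infty$, and the constant from \eqref{S3} is also $p$-dependent. So as written you obtain, for each $p\in(n,\infty)$, an $\ep_1(p)$ that works for that $p$ only, whereas the proposition requires a single $\ep_1$ valid for all $p$. The paper addresses this at the outset by an interpolation remark: since $t^{\frac12-\frac n{2p}}\nlp p{w(t)}$ is bounded for every $p\in(n,\infty)$, the interpolation inequality $\nlp{p_2}{\cdot}\leq\nlp{p_1}{\cdot}^\alpha\nlp{p_3}{\cdot}^{1-\alpha}$ shows that the desired limit for one value of $p$ implies it for all. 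Add this observation (fix one $p_0$, run your argument, then interpolate) and your proof is complete.
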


\begin{proof}
Our reasoning is inspired by 
 the proof of the stability Theorem \ref{stability} from \cite{MR2143356}.

Because of the interpolation inequality 
$$
\nlp {p_2}{\cdot}\leq\nlp{p_1}\cdot^\al\nlp{p_3}\cdot^{1-\al}, \qquad \text{where}\quad 
\al=\frac{p_1p_3-p_1p_2}{p_2p_3-p_1p_2}\quad \text{and}\quad  p_1<p_2<p_3,
$$
and the fact that $t^{\frac12-\frac{n}{2p}}\nlp p{w(t)}$ is bounded,  we remark that it suffices to get the limit of the $L^p$-norm of $w(t)$ for only one $p\in(n,\infty)$. 
Let us consider   $p>2n$.

Applying the Leray projector $\P$ to \eqref{eqw} and using the Duhamel formula we obtain that
\begin{equation}\label{duhamel}
w(t)=-\int_0^t e^{-(t-s)A}\P\dive(w\otimes u+\vb\otimes w)(s)\,ds-\int_0^t e^{-(t-s)A}\P F(s)\,ds.
\end{equation} 
The function
\begin{equation*}
h(t)\equiv  t^{\frac12-\frac{n}{2p}}\nlp p{w(t)}
\end{equation*}
is bounded on $(0,\infty)$ due to estimates \eqref{v:bar:v} and \eqref{ineq:decay}.
Hence, using the  decay estimate  \eqref{S3} with $q=p/2\geq n$, we may bound 
\begin{align*}
\nlp p{ \int_0^t e^{-(t-s)A}\P\dive(w\otimes u+\vb\otimes w)(s)\,ds} \hskip -4cm &\\
&\leq C\int_0^t (t-s)^{-\frac12-\frac n{2p}}\nlp{\frac p2}{ (w\otimes u+\vb\otimes w)(s)}\,ds\\
&\leq C\int_0^t (t-s)^{-\frac12-\frac n{2p}}\nlp p{w(s)}(\nlp p{u(s)}+\nlp p{\vb(s)})\,ds\\
&\leq C\ep_1\int_0^t (t-s)^{-\frac12-\frac n{2p}}s^{-1+\frac n{p}}h(s)\,ds\\
&= C\ep_1 t^{-\frac12+\frac n{2p}}\int_0^1 (1-\tau)^{-\frac12-\frac n{2p}}\tau^{-1+\frac n{p}}h(t\tau)\,d\tau.
\end{align*}
Thus, computing  the $L^p$-norm in \eqref{duhamel} we have 
\begin{equation*}
h(t)\leq   C\ep_1\int_0^1 (1-\tau)^{-\frac12-\frac n{2p}}\tau^{-1+\frac n{p}}h(t\tau)\,d\tau+t^{\frac12-\frac{n}{2p}}\nlp p{\int_0^t e^{-(t-s)A}\P F(s)\,ds}.
\end{equation*}
Now, we apply the $\limsup\limits_{t\to\infty}$ to both sides of the previous inequality. 
By the Lebesgue dominated convergence theorem  and \eqref{as:inf:F},
we infer that
\begin{align*}
\limsup  _{t\to\infty}h(t)
\leq &C\ep_1  \limsup  _{t\to\infty}h(t) \int_0^1 (1-\tau)^{-\frac12-\frac n{2p}}\tau^{-1+\frac n{p}}\,d\tau\\
&\qquad +\limsup  _{t\to\infty}t^{\frac12-\frac{n}{2p}}\nlp p{\int_0^t e^{-(t-s)A}\P F(s)\,ds}\\
=&C\ep_1 \limsup  _{t\to\infty}h(t).
\end{align*}
Choosing $\ep_1>0$ such that  $C\ep_1<1$, we conclude that 
$\limsup  \limits_{t\to\infty}h(t)=0$.
This completes the proof of Proposition \ref{smallF}.
\end{proof}

To establish \eqref{as:inf:F}, we use the decomposition 
of $F$  in \eqref{decF} to write
\begin{equation*}
 \int_0^t e^{-(t-s)A}\P F(s)\,ds
=\sum_{j=1}^4 \int_0^t e^{-(t-s)A}\P F_j(s)\,ds 
\equiv \sum_{j=1}^4I_j,
\end{equation*}
and we  show that 
\begin{equation*}
\lim_{t\to\infty}t^{\frac12-\frac n{2p}}\nlp p{I_j(t)  }=0\qquad\text{for each}\quad  j\in\{1,2,3,4\}
\quad\text{and for every}\quad p\in (n,\infty).    
\end{equation*}

As $\supp(\nabla f)\subset B_R$, we note that $\vb=v$ outside the ball
$B_R$. Using also the fact that $f=1$  outside $B_{2R/3}$, we obtain immediately ({\it cf.}~\eqref{decF})
 that the terms $F_1,\dots,F_4$ are compactly supported in the ball $B_R$. 
In order to estimate each term $I_j$, we prove  the following lemma:

\begin{lemma}\label{lemmadec}
 Let $q\in(1,p]$, $r\in[q,\infty)$, $0\leq a<b\leq t$ and consider a sufficiently
regular  function $g(t,x)$ supported in $\R_+\times B_R$. There exists a constant $C=C(R,p,q,r)>0$ such that 
\begin{equation*}
\nlp p{ \int_a^b e^{-(t-s)A}\P g(s)\,ds }\leq C\int_a^b  (t-s)^{\frac{n}{2p}-\frac{n}{2q}} \nlpc r{  g(s)}\,ds. 
\end{equation*}
\end{lemma}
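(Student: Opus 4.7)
\textbf{Proof plan for Lemma \ref{lemmadec}.} The plan is a straightforward three-step chaining: move the norm under the integral, apply the Stokes decay estimate, and then use compact support to pass from $L^q$ to $L^r$.

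First I would use Minkowski's inequality for integrals to push the $L^p(\Omega)$ norm inside the time integral, which reduces everything to a pointwise-in-$s$ estimate of $\nlp p{e^{-(t-s)A}\P g(s)}$. Since $1<q\leq p<\infty$ by hypothesis and $\P g(s)\in L^q_\sigma(\Omega)$ (because $\P$ maps $L^q(\Omega)$ into $L^q_\sigma(\Omega)$), I can invoke the Stokes semigroup decay estimate \eqref{S1} with initial datum $\P g(s)$:
\begin{equation*}
\nlp p{e^{-(t-s)A}\P g(s)}\leq K_1(t-s)^{\frac{n}{2p}-\frac{n}{2q}}\nlp q{\P g(s)}.
\end{equation*}

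Next I would apply the $L^q$-continuity of the Leray projector on $\Omega$ (valid for all $1<q<\infty$, as recalled in Section \ref{sec:linear}) to absorb $\P$ and get $\nlp q{\P g(s)}\leq C\nlp q{g(s)}$. Because $g(s,\cdot)$ is supported in $B_R$, this $L^q(\Omega)$ norm coincides with $\nlpc q{g(s)}$, and the finite measure of $B_R$ combined with Hölder's inequality produces
\begin{equation*}
\nlpc q{g(s)}\leq |B_R|^{\frac1q-\frac1r}\nlpc r{g(s)},
\end{equation*}
which is legitimate precisely because $r\geq q$.

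Chaining these three bounds, integrating over $s\in[a,b]$, and absorbing the constants $K_1$, the norm of $\P$, and $|B_R|^{1/q-1/r}$ into a single constant $C=C(R,p,q,r)$, yields the asserted inequality. There is no real obstacle here: the only thing to verify is that each of the three ingredients is applicable under the stated hypotheses, namely $q>1$ (for $\P$), $q\leq p<\infty$ (for \eqref{S1}), and $r\geq q$ (for Hölder on the bounded set $B_R$).
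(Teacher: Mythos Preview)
Your proposal is correct and follows essentially the same approach as the paper's proof: apply the Stokes decay estimate \eqref{S1} together with the $L^q$-boundedness of $\P$, then use the compact support in $B_R$ to replace the $L^q$-norm by the $L^r$-norm. The paper's argument is more terse and leaves the Minkowski step and the H\"older passage from $L^q(B_R)$ to $L^r(B_R)$ implicit, but the content is identical.
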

\begin{proof}
 Using the decay estimate \eqref{S1}
 and recalling that the Leray projector $\P$ is bounded on $L^q(\Omega)$ for all $1<q<\infty$, we may estimate
\begin{align*}
\nlp p{ \int_a^b e^{-(t-s)A}\P g(s)\,ds }
&\leq C \int_a^b (t-s)^{\frac{n}{2p}-\frac{n}{2q}}\nlp q{  g(s)}\,ds\\
&\leq C \int_a^b (t-s)^{\frac{n}{2p}-\frac{n}{2q}}\nlpc r{  g(s)}\,ds.
\end{align*}
\end{proof}

In the remainder of this section, we use exponents $p,q,r$ satisfying
$$\frac n2<q\leq p\quad\text{and}\quad  q\leq r<\infty.$$ 
The values of $q$ and $r$ will be conveniently chosen later on
and can possibly change from a term to another.
Sometimes, we may use the notation $r_1$ or $r_2$ instead of $r$, when different  additional assumptions on $r$ are imposed on $r_1$ and $r_2$
and if there is a possibility of confusion.  We always assume that $q\leq r_1,r_2<\infty$. Observe also that, under these  assumptions,  we have that  $\frac n{2p}-\frac n{2q}>-1$.

\bigskip

{\it Estimate of $I_1$.} Using Lemma \ref{lemmadec} we may bound
\begin{align*}
t^{\frac12-\frac{n}{2p}}\nlp p{I_1(t)  }
&\leq C t^{\frac12-\frac{n}{2p}}\int_0^t (t-s)^{\frac{n}{2p}-\frac{n}{2q}}\nlpr r{  p(s)}\,ds.
\end{align*}
It is well-known that the pressure term in \eqref{veq} can be expressed as 
$
p=-\sum_{i,j}\partial_i\partial_j\Delta^{-1}(v_i v_j),
$
where  the operator $\partial_i\partial_j\Delta^{-1}$ is bounded on $L^\al(\R^n)$ for every $1<\al<\infty$.
Thus, using the H\"older inequality we obtain the estimate 
\begin{equation*}
\nlpr r{  p(s)}\leq C\nlpr{2r}{v(s)}^2\leq Cs^{\frac n{2r}-1}
\end{equation*}
which leads to the inequality
\begin{equation*}
 t^{\frac12-\frac{n}{2p}}\nlp p{I_1(t)  }
\leq Ct^{\frac12-\frac n{2q}+\frac n{2r}}. 
\end{equation*}
Clearly, the right-hand side goes to 0 as $t\to\infty$ if $q<n$ and $r$ is sufficiently large.

\bigskip

{\it Estimate of $I_2$.} We decompose
\begin{align*}
I_2&=   \int_0^t e^{-(t-s)A}\P [\partial_s\psi(s)\nabla f]\,ds \\
&=\int_0^{\frac t2} e^{-(t-s)A}\P [\partial_s\psi(s)\nabla f]\,ds +\int_{\frac t2}^t e^{-(t-s)A}\P [\partial_s\psi(s)\nabla f]\,ds \\
&\equiv I_{21}+I_{22}.
\end{align*}
Using Lemma \ref{lemmadec} we have that
\begin{equation*}
t^{\frac12-\frac{n}{2p}}\nlp p{I_{22}(t)  }
\leq C t^{\frac12-\frac{n}{2p}}\int_{\frac t2}^t (t-s)^{\frac{n}{2p}-\frac{n}{2q}}\nlpc r{ \partial_s\psi(s) }\,ds.
\end{equation*}
Since $\psi$ has zero average on the ball $B_R$, so does $\partial_t \psi$. By the Poincaré inequality applied on $B_R$
and recalling \eqref{ineq:decay},  we have that
\begin{equation*}
\nlpc r{ \partial_s\psi(s) }\leq C\nlpc r{ \partial_s\nabla\psi(s) }\leq C  \nlpr r{ \partial_sv(s) }\leq Cs^{\frac n{2r}-\frac32},
\end{equation*}
where we have used \eqref{boundS} applied to $\partial_t\psi$. 
Thus
\begin{equation*}
t^{\frac12-\frac{n}{2p}}\nlp p{I_{22}(t)  }\leq C   t^{\frac12-\frac{n}{2p}}\int_{\frac t2}^t (t-s)^{\frac{n}{2p}-\frac{n}{2q}} s^{\frac n{2r}-\frac32}\,ds\leq C t^{\frac n{2r}-\frac n{2q}}\to\infty \quad \text{as }t\to\infty
\end{equation*}
provided that $q<r$.

To bound $I_{21}$, we integrate  by parts
\begin{equation}\label{i21}
\begin{split}
I_{21}&=  \int_0^{\frac t2} e^{-(t-s)A}\partial_s\P [\psi(s)\nabla f]\,ds \\
&=e^{-\frac t2A}\P [\psi(t/2)\nabla f]-e^{-tA}\P [\psi_0\nabla f] 
+\int_0^{\frac t2} Ae^{-(t-s)A}\P [\psi(s)\nabla f]\,ds .
\end{split}
\end{equation}
The $L^p$-norm of the first term on the right-hand side is easily bounded by 
\begin{equation*}
\begin{split}
\nlp p{ e^{-\frac t2A}\P [\psi(t/2)\nabla f]}&\leq C \nlpc p{\psi(t/2)}\leq  C \nlpc r{\psi(t/2)}\\
&\leq  C \nlpc r{\nabla\psi(t/2)}\leq C \nlpr r{v(t/2)}\leq Ct^{\frac n{2r}-\frac12}.
\end{split}
\end{equation*}
The second right-hand side term  of \eqref{i21} can be estimated as follows
\begin{equation*}
\nlp p{ e^{-tA}\P [\psi_0\nabla f] }\leq Ct^{\frac{n}{2p}-\frac{n}{2q}}\nlpc q{\psi_0}.
\end{equation*}

Concerning the last term in \eqref{i21}, we first note that
\begin{equation*}\begin{split}
\nlp p{\P [\psi(s)\nabla f]}&\leq 
C\nlpc p{\psi(s)}\leq 
C\nlpc r{\psi(s)}\\
&\leq 
C\nlpc r{\nabla\psi(s)}\leq 
C\nlpr r{v(s)}\leq 
C s^{\frac n{2r}-\frac12}.
\end{split}
\end{equation*}
The Stokes semigroup estimates from  Proposition \ref{ellpest} give
\begin{align*}
\nlp p{ \int_0^{\frac t2} Ae^{-(t-s)A}\P [\psi(s)\nabla f]\,ds }
&\leq C  \int_0^{\frac t2} (t-s)^{-1}\nlp p{\P [\psi(s)\nabla f]}\,ds \\
&\leq C  \int_0^{\frac t2} (t-s)^{-1}s^{\frac n{2r}-\frac12}\,ds= C  t^{\frac n{2r}-\frac12}.
\end{align*}

We conclude from the previous estimates  that
\begin{equation*}
t^{\frac12-\frac{n}{2p}}\nlp p{I_2(t)}  
\leq t^{\frac12-\frac{n}{2p}}\nlp p{I_{21}(t)} + t^{\frac12-\frac{n}{2p}}\nlp p{I_{22}(t)}   
\to 0
\quad \text{as}\quad t\to\infty
\end{equation*}
provided  that $q<n$ and $r>p$.

\bigskip

{\it Estimate of $I_4$.} We have that $F_4=\vb\cdot\nabla\vb-f v\cdot\nabla v=\vb\cdot(\nabla\vb-f\nabla v)+f (\vb-v)\cdot\nabla v$, where  both terms $\vb\cdot(\nabla\vb-f\nabla v)$ and $f (\vb-v)\cdot\nabla v$ are supported in $B_R$. Now, we decompose 
\begin{equation*}
I_4= \int_0^t e^{-(t-s)A}\P[ f (\vb-v)\cdot\nabla v](s)\,ds+\int_0^t e^{-(t-s)A}\P[ \vb\cdot(\nabla\vb-f\nabla v)](s)\,ds\equiv I_{41}+I_{42}.
\end{equation*}
 First, we  use Lemma \ref{lemmadec} to bound
\begin{equation*}
 \nlp p{I_{41}}\leq C \int_0^t (t-s)^{\frac{n}{2p}-\frac{n}{2q}}\nlpc {r_1} {(\vb-v)\cdot\nabla v(s)}\,ds.
\end{equation*}
Lemma \ref{lem:mapping} and relation \eqref{ineq:decay} imply that
\begin{equation*}
 \nlpr {r_1} {(\vb-v)\cdot\nabla v}\leq C\nlpr {2{r_1}}v \nlpr {2{r_1}}{\nabla v}\leq Ct^{\frac n{2{r_1}}-\frac32} 
\end{equation*}
so
\begin{equation*}
 \nlp p{I_{41}}\leq C \int_0^t (t-s)^{\frac{n}{2p}-\frac{n}{2q}}s^{\frac n{2{r_1}}-\frac32} \,ds\leq Ct^{\frac n{2p}-\frac n{2q}+\frac n{2{r_1}}-\frac12}.
\end{equation*}
Assuming that $\frac n{2{r_1}}-\frac32>-1$ (\textit{i.e.} ${r_1}<n$) and   $q<{r_1}$, it follows that
\begin{equation*}
t^{\frac12-\frac{n}{2p}}\nlp p{I_{41}}\leq Ct^{-\frac n{2q}+\frac n{2{r_1}}}\to0  
\qquad \text{as}\quad t\to\infty.
\end{equation*}

Using  again Lemma \ref{lemmadec}  we  bound
\begin{equation*}
 \nlp p{I_{42}}\leq C \int_0^t (t-s)^{\frac{n}{2p}-\frac{n}{2q}}\nlpc {r_2} {\vb\cdot(\nabla\vb-f\nabla v)(s)}\,ds.
\end{equation*}
Since $\nabla\vb=f\nabla v+v\otimes\nabla f+\nabla\psi\nabla f+\psi\nabla^2 f$, by relation  \eqref{boundS},
 we can further estimate
\begin{align*}
\nlpc {r_2} {\vb\cdot(\nabla\vb-f\nabla v))}  
&\leq \nlpr {2{r_2}}{\vb}\nlpr{2{r_2}}{v\otimes\nabla f+\nabla\psi\nabla f+\psi\nabla^2 f }\\
&\leq  C\nlpr {2{r_2}}{\vb}(\nlpr{2{r_2}}{v} +\nlpc{2{r_2}}{\psi})\\
&\leq  C\nlpr {2{r_2}}v^2\leq Ct^{\frac n{2{r_2}}-1}.
\end{align*}
Thus, we infer that
\begin{equation*}
\nlp p{I_{42}}
\leq C  \int_0^t (t-s)^{\frac{n}{2p}-\frac{n}{2q}}s^{\frac n{2{r_2}}-1}\,ds
\leq Ct^{\frac{n}{2p}-\frac{n}{2q}+\frac n{2{r_2}}},
\end{equation*}
and therefore
$
t^{\frac12-\frac{n}{2p}}\nlp p{I_{42}}\leq Ct^{\frac12-\frac n{2q}+\frac n{2{r_2}}}\to0  
$
 as $t\to\infty$ provided that   $q<n$ and ${r_2}$ is sufficiently large.

\bigskip

{\it Estimate of $I_3$.} Finally, we deal with the remaining term $I_3$. Since
\begin{equation*}
F_3=f\Delta v-\Delta\dive(f\psi)=  f\Delta v-\Delta(fv)-\Delta(\psi\nabla f)
=-v\Delta f-2\nabla f \nabla v  -\Delta(\psi\nabla f),
\end{equation*}
we obtain
\begin{align*}
I_3&= \int_0^t e^{-(t-s)A}\P F_3(s)\,ds \\
&=-\int_0^t e^{-(t-s)A}\P (v\Delta f)(s)\,ds -2\int_0^t e^{-(t-s)A}\P (\nabla f \nabla v)(s)\,ds 
-\int_0^t e^{-(t-s)A}\P\Delta(\psi\nabla f)(s) \,ds \\
&\equiv I_{31}+I_{32}+I_{33}.
\end{align*}

To bound  the middle term, it suffices to  use Lemma \ref{lemmadec} in the following way
\begin{align*}
t^{\frac12-\frac{n}{2p}}\nlp p{I_{32}(t)}
&\leq Ct^{\frac12-\frac{n}{2p}}\int_0^t (t-s)^{\frac{n}{2p}-\frac{n}{2q}}\nlpc r{ \nabla v(s)}\,ds\\
&\leq Ct^{\frac12-\frac{n}{2p}}\int_0^t (t-s)^{\frac{n}{2p}-\frac{n}{2q}}s^{\frac n{2r}-1}\,ds\\
&\leq Ct^{\frac12-\frac n{2q}+\frac n{2r}}\to0 \quad \text{as}\quad t\to\infty,
\end{align*}
 provided that   $q<n$ and $r$ is sufficiently large.

Now, we are going to  estimate  $I_{33}$. 
 It is well-known that for every vector field $w$ 
(not necessarily neither divergence free nor tangent to the boundary), 
 the quantity 
$w-\P w$ is a gradient: $w-\P w=\nabla g$ for some scalar function $g$. 
It is also known that the Leray projection $\P$ of such a vector field 
 vanishes. Therefore,
 $\P\Delta w-\P\Delta\P w=\P\Delta(w-\P w)=\P\Delta\nabla g=\P\nabla\Delta g=0$ so that $\P\Delta w=\P\Delta\P w=A\P w$. Using this observation we may write
\begin{align*}
I_{33}&=-\int_0^t e^{-(t-s)A}\P\Delta(\psi\nabla f)(s) \,ds \\
&=\int_0^{t-\frac1t}Ae^{-(t-s)A}\P(\psi\nabla f)(s) \,ds
-\int_{t-\frac1t}^t e^{-(t-s)A}\P\Delta(\psi\nabla f)(s) \,ds\\
&\equiv I_{331}+I_{332}.
\end{align*}
We  apply  Lemma \ref{lemmadec} with $p=q$ and recall relation \eqref{boundS} to bound
\begin{align*}
\nlp p{I_{331}}
&\leq \int_0^{t-\frac1t} (t-s)^{-1}\nlpc r{\psi(s)}\,ds\\
&= \int_0^{t-\frac1t} (t-s)^{-1}\nlpc r{v(s)}\,ds\\
&\leq \int_0^{t-\frac1t} (t-s)^{-1}s^{\frac n{2r}-\frac12}\,ds
\leq Ct^{\frac n{2r}-\frac12}\ln(2+t).
\end{align*}
 We conclude that
$
t^{\frac12-\frac{n}{2p}}  \nlp p{I_{331}}\leq Ct^{\frac n{2r}-\frac n{2p}}\ln(2+t)\to0
$
as $t\to\infty$ provided that   $r>p$.

To estimate  $I_{332}$, we use again Lemma \ref{lemmadec} with $p=q$:
\begin{equation*}
\nlp p{I_{332}}\leq  \int_{t-\frac1t}^t \nlpc r{\Delta(\psi\nabla f)(s) }\,ds. 
\end{equation*}
Clearly, we have 
\begin{equation*}
\begin{split}
\nlpc r{\Delta(\psi\nabla f) (s)}&\leq C(\nlpc r{\psi}+\nlpc r{\nabla \psi}+\nlpc r{\Delta\psi})\\  
&\leq C(\nlp r{v}+\nlp r{\nabla v})  
\leq C(s^{\frac n{2r}-\frac12}+s^{\frac n{2r}-1})\leq Cs^{\frac n{2r}-\frac12}
\end{split}
\end{equation*}
for all $s\in(t-1/t,t)$ (notice that we may assume $t\geq2$). Therefore,
\begin{equation*}
t^{\frac12-\frac{n}{2p}} \nlp p{I_{332}}\leq  
t^{\frac12-\frac{n}{2p}} \int_{t-\frac1t}^t s^{\frac n{2r}-\frac12}\,ds \leq C
t^{\frac12-\frac{n}{2p}}t^{\frac n{2r}-\frac32}
=Ct^{\frac n{2r}-\frac n{2p}-1}\to0 \quad \text{as}\quad t\to\infty.
\end{equation*}

It remains to estimate the term $I_{31}$ which we decompose in the following way
{\allowdisplaybreaks
\begin{align*}
 I_{31}
=&-\int_0^t e^{-(t-s)A}\P (v\Delta f)(s)\,ds \\
=&\int_0^t e^{-(t-s)A}\P (v\Delta (1-f))(s)\,ds \\
=&\int_0^{\frac1t} e^{-(t-s)A}\P (v\Delta (1-f))(s)\,ds + \int_{\frac1t}^t e^{-(t-s)A}\P (v\Delta (1-f))(s)\,ds \\
=&\int_0^{\frac1t} e^{-(t-s)A}\P (v\Delta (1-f))(s)\,ds + \int_{\frac1t}^t e^{-(t-s)A}\P \Delta (v(1-f))(s)\,ds \\
& +2 \int_{\frac1t}^t e^{-(t-s)A}\P (\nabla v \cdot\nabla f)(s)\,ds - \int_{\frac1t}^t e^{-(t-s)A}\P (\Delta v (1-f))(s)\,ds \\
\equiv& I_{311}+I_{312}+I_{313}+I_{314}.
\end{align*}
}

As the function $1-f$ is supported on the ball $B_R$, the terms 
$I_{312}$ and $I_{313}$ can be treated in the same way as the integrals $I_{33}$  and $I_{32}$.
Next, we use Lemma \ref{lemmadec} to deal with $I_{311}$:
\begin{align*}
t^{\frac12-\frac{n}{2p}}\nlp p {I_{311}}&\leq Ct^{\frac12-\frac{n}{2p}}\int_0^{\frac1t} (t-s)^{\frac{n}{2p}-\frac{n}{2q}}\nlpr r{v(s)}\,ds  \\
&\leq Ct^{\frac12-\frac{n}{2p}}\int_0^{\frac1t} (t-s)^{\frac{n}{2p}-\frac{n}{2q}}s^{\frac n{2r}-\frac12}\,ds  
\leq Ct^{-\frac n{2q}-\frac n{2r}}\to0
\end{align*}
as $t\to\infty$. Moreover, we study $I_{314}$ 
 in a similar manner 
\begin{align*}
t^{\frac12-\frac{n}{2p}}\nlp p {I_{314}}&\leq Ct^{\frac12-\frac{n}{2p}}\int_{\frac1t}^t (t-s)^{\frac{n}{2p}-\frac{n}{2q}}\nlpr r{\Delta v(s)}\,ds  \\
&\leq Ct^{\frac12-\frac{n}{2p}}\int_{\frac1t}^t (t-s)^{\frac{n}{2p}-\frac{n}{2q}}s^{\frac n{2r}-\frac32}\,ds 
\leq Ct^{1-\frac n{2q}-\frac n{2r}} \to0
\end{align*}
as $t\to\infty$ provided that   $r>n$ and $q$ is sufficiently close to $\frac n2$. 

This completes the proof of Theorem \ref{withsmallu}.

\section{Asymptotics of solutions for large data and $n\geq3$}
\label{sectdim3}

In the case $n\geq3$, Theorem \ref{corwithsmallu}
can be extended to a certain class of large initial data as follows.

\begin{theorem}\label{largedata}
Suppose that $n\geq3$. There exists $\ep>0$ such that if $u_0\in L^{n,\infty}(\Om)$ is divergence free, tangent to the boundary, and such that
\begin{equation*}
\limsup_{\la\to0}\la\mes\{x\in\Omega \,:\, |u_0(x)|>\la\}^{\frac1n}<\ep,
\end{equation*}
then there exists a global-in-time  solution $u$ of the Navier-Stokes equations on the exterior domain $\Om$ with initial velocity $u_0$ and such that
\begin{equation}\label{limsupex}
\limsup _{t\to\infty}t^{\frac12-\frac n{2p}}\nlp p{u(t)}<\infty.  
\end{equation}
Moreover, let $v_0\in L^{n,\infty}(\R^n)$ be divergence free and such that
\begin{equation*}
\limsup_{\la\to0}\la\mes\{x\in \R^n\,:\, |v_0(x)|>\la\}^{\frac1n}<\ep.
\end{equation*}
 Then the Navier-Stokes equations on $\R^n$ with initial velocity $v_0$ admit a  global-in-time 
 solution $v$ which satisfies a similar bound to the one in \eqref{limsupex}. 
Finally, if $u_{0}$ and $v_{0}$ are comparable at infinity {\rm (}{\it cf.}~Definition \ref{def:eq}{\rm )}, then 
the corresponding solutions $u$ and $v$ have the same asymptotic behaviour as $t\to\infty$ in our usual  sense:
$
\lim\limits\limits_{t\to\infty}t^{\frac12-\frac{n}{2p}}\nlp p{u(t)-v(t)}=0
$
for each $p\in (n,\infty)$.
\end{theorem}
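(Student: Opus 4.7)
The plan is to implement a \emph{small part plus integrable remainder} decomposition of both initial data and then to recycle the results of Section~\ref{sect:withsmallu} for the small parts, while the remainders will carry strictly subcritical decay inherited from their better integrability.

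\textbf{Existence and decay.} Given $u_0$ satisfying the limsup bound, choose $\lambda_0>0$ such that $\lambda\mes\{x\in\Om:|u_0(x)|>\lambda\}^{1/n}<2\ep$ for every $\lambda\leq\lambda_0$, and split $u_0=u_0^s+u_0^L$ with $u_0^s:=\ppom(u_0\mathbf{1}_{|u_0|\leq\lambda_0})$ and $u_0^L:=\ppom(u_0\mathbf{1}_{|u_0|>\lambda_0})$. A layer-cake computation together with boundedness of $\ppom$ on $L^{n,\infty}(\Om)$ and on $L^q(\Om)$ yields $\|u_0^s\|_{L^{n,\infty}(\Om)}\leq C\ep$ and $u_0^L\in L^q(\Om)$ for every $q\in(1,n)$; thanks to $n\geq 3$, one may in particular take $q=2$. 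Kozono--Yamazaki then produces the global small-data solution $u^s$ with the usual critical decay $\|u^s(t)\|_{L^p(\Om)}\leq C\ep\,t^{n/(2p)-1/2}$. Writing $u=u^s+\tilde u$, the original problem becomes a perturbed Navier--Stokes system for $\tilde u$ with initial velocity $u_0^L$ and linear drift by $u^s$. I would solve this by a Fujita--Kato contraction in a Kato-type class built on the $L^p$-decay of the Stokes semigroup: smallness of $u^s$ in $L^{n,\infty}$ closes the fixed point, and the $L^q$-integrability of $u_0^L$ yields the strictly subcritical decay $t^{n/(2q)-n/(2p)}\|\tilde u(t)\|_{L^p(\Om)}\to 0$ via Corollary~\ref{cor:dec}. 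Adding the two parts produces the global solution $u$ satisfying (\ref{limsupex}). The identical construction on $\R^n$, with $\pp$ in place of $\ppom$, gives $v=v^s+\tilde v$ with analogous properties.

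\textbf{Asymptotic comparison.} The decompositions above can be chosen compatibly: using the hypothesis $u_0-v_0|_\Om\in L^{q_0}(\Om)$ together with the same threshold $\lambda_0$ for both fields, a short layer-cake argument shows that the cross-differences $u_0^s-(v_0^s)|_\Om$ and $u_0^L-(v_0^L)|_\Om$ both lie in $L^q(\Om)$ for some $q\in(1,n]$, so that the pairs $(u_0^s,v_0^s)$ and $(u_0^L,v_0^L)$ are themselves comparable at infinity. Applying Theorem~\ref{corwithsmallu} to $(u_0^s,v_0^s)$ gives
\begin{equation*}
t^{1/2-n/(2p)}\|u^s(t)-v^s(t)\|_{L^p(\Om)}\longrightarrow 0\qquad\text{as }t\to\infty.
\end{equation*}
For the remainder difference $\tilde u-\tilde v|_\Om$, I would write its Duhamel formula and estimate term by term: the initial-data contribution lives in $L^q$ with $q<n$ and decays faster than the critical rate by Corollary~\ref{cor:dec}; the linear and bilinear cross-terms involving the small factors $u^s,v^s$ are absorbed via the same Gronwall/fixed-point argument as in Proposition~\ref{smallF}; and the genuinely nonlinear term $\tilde u\cdot\nabla\tilde u-\tilde v\cdot\nabla\tilde v$ is subcritical because $\tilde u$ and $\tilde v$ each already decay strictly faster than the critical rate. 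The triangle inequality finishes the comparison.

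\textbf{Main obstacle.} The hardest step is the perturbed existence theorem for $\tilde u$ on the exterior domain. One has to reconcile simultaneously (i) an $L^q$ initial-data continuity at $t=0$, (ii) the critical $L^p$-decay needed to close the bilinear Kato-class estimate, and (iii) the uniform-in-time action of the small-but-nontrivial drift $u^s$. The functional framework that does all three is a weighted Kato class in which one controls $\sup_{t>0}t^{n/(2q)-n/(2p)}\|\tilde u(t)\|_{L^p(\Om)}$ together with suitable weighted bounds on $\nabla\tilde u$ and $\partial_t\tilde u$ in the spirit of (\ref{ineq:decay}). The assumption $n\geq 3$ is crucial here: it permits the choice $q=2$ for the integrable remainder, keeps the analysis inside Leray--Hopf territory if needed, and provides the right Stokes-semigroup decay exponents for Corollary~\ref{cor:dec} to deliver strictly subcritical estimates. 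The two-dimensional case precisely fails both these features, which is why it is treated via the finer structural decomposition of Theorem~\ref{dim2}.
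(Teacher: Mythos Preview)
Your decomposition $u_0=u_0^s+u_0^L$ with $\|u_0^s\|_{L^{n,\infty}}\le C\ep$ and $u_0^L\in L^2(\Om)$ is exactly the one the paper uses (quoting \cite[Lemma~4.2]{BBIS11}); your explicit truncation is in fact how that lemma is proved. The divergence with the paper is in how the remainder $\tilde u=u-u^s$ is handled, and there the proposal has a genuine gap.

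A Fujita--Kato contraction for $\tilde u$ cannot close globally. The equation for $\tilde u$ contains the honestly nonlinear term $\tilde u\cdot\nabla\tilde u$, and $u_0^L$ is \emph{large} in every relevant norm. In a Kato class with weight $t^\beta$ the bilinear map scales like $t^{1/2-n/(2p)-\beta}$, which is bounded for all $t>0$ only at the critical exponent $\beta=\tfrac12-\tfrac n{2p}$; but in that class $\sup_{t>0}t^\beta\nlp p{e^{-tA}u_0^L}$ is comparable to $\|u_0^L\|_{L^{n,\infty}}$, which is not small (indeed, for $q<n$ this quantity even blows up as $t\to0$). Smallness of the drift $u^s$ absorbs the cross-terms $u^s\cdot\nabla\tilde u+\tilde u\cdot\nabla u^s$ but does nothing for $\tilde u\cdot\nabla\tilde u$. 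So neither the fixed point nor the claimed subcritical decay of $\tilde u$ follows, and your downstream comparison of $\tilde u-\tilde v$, which relies on that subcritical decay, is left unsupported.

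The paper proceeds differently: it treats $z=\tilde u$ by the \emph{energy method}. Multiplying the equation for $z$ by $z$, integrating by parts, and using the Lorentz--Sobolev inequality $\|z\|_{L^{2n/(n-2),2}(\Om)}\le C\nlp2{\nabla z}$ (this is precisely where $n\ge3$ is used) together with $\|u^s(t)\|_{L^{n,\infty}}\le C\ep$ yields
\[
\partial_t\nlp2z^2+\nlp2{\nabla z}^2\le 0,
\]
hence $z\in L^\infty_tL^2\cap L^2_t\dot H^1\hookrightarrow L^4_t\dot H^{1/2}\hookrightarrow L^4_tL^{n,\infty}$. Doing the same for $Z=v-W$ on $\R^n$, one extracts a single time $T$ at which both $\|u(T)\|_{L^{n,\infty}(\Om)}$ and $\|v(T)\|_{L^{n,\infty}(\R^n)}$ are below the small-data threshold. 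From time $T$ one then \emph{restarts} the small-data machinery: Theorem~\ref{stability} (using $u(T)-u^s(T)=z(T)\in L^2$) gives that $u$ and $u^s$ have the same asymptotics, and Theorem~\ref{corwithsmallu} compares $u^s$ with $v^s$; combining gives the conclusion. This ``become small at some positive time, then restart'' step is the idea your sketch is missing; it simultaneously replaces your perturbed-existence fixed point and your Duhamel comparison of $\tilde u-\tilde v$.
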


\begin{remark}
Even though the initial velocity $u_0$ is not small in $L^{n,\infty}(\Om)$ and is not square-integrable, it is not surprising that a weak solution exists globally. 
Our result is reminiscent of the result in \cite{MR968416} on
the global-in-time existence of weak solutions to the Navier-Stokes system 
that holds true for initial data in $L^p_\sigma(\R^n)$ with $2<p<n$.  
\end{remark}

\begin{proof}[Proof of Theorem \ref{largedata}.]
We borrow a method from \cite{BBIS11}. Using   \cite[Lemma 4.2]{BBIS11} we decompose $u_0=z_0+w_0$ where $z_0\in L^2(\Om)$ and $\|w_0\|_{L^{n,\infty}(\Om)}\leq\ep$. Note that even though the statement of that lemma is given for $n=3$, the proof goes through for $n\geq3$. Let $w$ solve the Navier-Stokes equations on 
the exterior domain $\Om$ with initial velocity $w_0$ and set $z=u-w$. Then $z$ solves
\begin{equation*}
\partial_t z-\Delta z+u\cdot\nabla z+z\cdot\nabla w=-\nabla p'.  
\end{equation*}
We multiply this equation by $z$ and integrate in space to obtain after an integration by parts:
\begin{equation*}
\partial_t\nlp2z^2+2\nlp2{\nabla z}^2=2\int_\Om z\cdot\nabla z\cdot w
\leq C\|z\|_{L^{\frac{2n}{n-2},2}(\Om)}\nlp2{\nabla z}\|w\|_{L^{n,\infty}(\Om)}  
\end{equation*}
where we used the H\"older inequality for Lorentz spaces.
Next, we recall the following Sobolev inequality in the Lorentz spaces
\begin{equation}\label{L:Sob}
  \|z\|_{L^{\frac{2n}{n-2},2}(\Om)}\leq C\nlp2{\nabla z}.
\end{equation}
Indeed, in the case of functions defined on $\R^n$, this relation follows from the Young inequality for convolution in Lorentz spaces and from the observation that $(-\Delta)^{-\frac12}$ is a convolution operator with a function homogeneous of degree $1-n$ which therefore belongs to $L^{\frac n{n-1},\infty}(\R^n)$. 
For $z$ vanishing on $\partial\Om$, we can extend it with zero values outside $\Om$. Denoting by $Ez$ this extension
and   applying the inequality known in $\R^n$ for $Ez$, we get
 \eqref{L:Sob} for  $z$ defined in the exterior domain.
Consequently, we infer that
\begin{equation*}
\partial_t\nlp2z^2+2\nlp2{\nabla z}^2
\leq C\nlp2{\nabla z}^2\|w\|_{L^{n,\infty}(\Om)}.   
\end{equation*}

Recall now that $w$ is a small solution. Therefore, if $\|w_0\|_{L^{n,\infty}(\Om)}$ is sufficiently small, 
\textit{i.e.} if $\ep>0$ is sufficiently small, then we  have that 
 $C\|w(t)\|_{L^{n,\infty}(\Om)}\leq 1$ for all $t$. Thus,
\begin{equation*}
\partial_t\nlp2z^2+\nlp2{\nabla z}^2
\leq 0,
\end{equation*}
so that $Ez\in L^\infty(\R_+;L^2(\R^n))\cap L^2(\R_+;\dot{H}^1(\R^n))$. 
These are of course just \textit{a priori} estimates, but a standard approximation procedure gives us the existence of a solution verifying these estimates. Since $Ez\in L^\infty(\R_+;L^2(\R^n))\cap L^2(\R_+;\dot{H}^1(\R^n))$, we infer that $Ez\in L^4(\R_+;\dot{H}^{\frac12}(\R^n))$. Therefore, for every $\gamma>0$, the set of times $t$ where $\|Ez(t)\|_{\dot{H}^{\frac12}(\R^n)}\geq\gamma$ has finite measure. Since $\dot{H}^{\frac12}(\R^n)\subset L^{n,\infty}(\R^n)$, we infer that the set of times where $\|z(t)\|_{L^{n,\infty}(\Om)}\geq\gamma$ has finite measure. 

A similar argument can be performed 
in the case of the solution 
 $v$ to the Navier-Stokes problem in the whole space $\R^n$. 
Indeed, we may decompose
 $v_0=Z_0+W_0$ where both $Z_{0}$ and $W_0$ are divergence free, such that $Z_0\in L^2(\R^n)$ and $\|W_0\|_{L^{n,\infty}(\R^n)}\leq\ep$ (\cite[Lemma 4.2]{BBIS11}). We denote by $W$ the solution of the Navier-Stokes equations on $\R^n$ with initial data $W_0$ and set $Z=v-W$. Similar estimates as for $z$ show that, for every  $\gamma>0$, the set of times where $\|Z(t)\|_{L^{n,\infty}(\R^n)}\geq\gamma$ has finite measure. 

We conclude  that there is a time $T>0$ where we have both 
inequalities $\|Z(T)\|_{L^{n,\infty}(\R^n)}\leq\ep$ and  $\|z(T)\|_{L^{n,\infty}(\Om)}\leq\ep$. Since by the theory of small solutions we also have that $\|W(t)\|_{L^{n,\infty}(\R^n)}\leq C\ep$ and  $\|w(t)\|_{L^{n,\infty}(\Om)}\leq C\ep$ for all $t\geq0$, we infer that $\|v(T)\|_{L^{n,\infty}(\R^n)}\leq C'\ep$ and  $\|u(T)\|_{L^{n,\infty}(\Om)}\leq C'\ep$ with $C'=C+1$. We choose $\ep$ sufficiently small such that the small data solutions, the stability result from Proposition \ref{stability},
 and the asymptotic result Theorem \ref{corwithsmallu} can be applied starting from both times $t=0$ and $t=T$. In particular, we can change if necessary the values of $u$ and $v$ starting from time $T$ with the values of the small solutions that can be constructed starting from $u(T)$ and $v(T)$. 

Since $u(T)-w(T)\in L^2(\Om)$, the stability result stated in Theorem \ref{stability} applied from time $t=T$ and the decay estimates of the Stokes operator given in Proposition \ref{ellpest} imply that for all $p\in(n,\infty)$
\begin{equation*}
\lim_{t\to\infty} t^{\frac12-\frac n{2p}}\nlp p{u(t)-w(t)}=0. 
\end{equation*}
 Similarly, for all $p\in(n,\infty)$ we have that
\begin{equation*}
\lim_{t\to\infty} t^{\frac12-\frac n{2p}}\nlpr p{v(t)-W(t)}= 0.
\end{equation*}
 Moreover, $w_{0}-W_{0}=(u_{0}-v_{0})- (z_{0}-Z_{0})$ with $u_{0}-v_{0}\in L^{q_{0}}(\Om)$ for some $q_{0}\in (1,n]$ and 
$z_{0}-Z_{0}\in L^2(\Om)$ so $w_{0}-W_{0}\in L^{q_{0}}(\Om)+ L^2(\Om)$. We cannot apply directly Theorem \ref{corwithsmallu} to $w$ and $W$ because $w_0$ and $W_0$ are not comparable at infinity in the sense of Definition \ref{def:eq}: we have $w_{0}-W_{0}\in L^{q_{0}}+ L^2$ instead of $w_{0}-W_{0}\in L^q$ for some $q\in (1,n]$. Nevertheless, the proof of  Theorem \ref{corwithsmallu} goes through in this case and we obtain that  for all $p\in(n,\infty)$ we have 
\begin{equation*}
\lim_{t\to\infty} t^{\frac12-\frac n{2p}}\nlp p{w(t)-W(t)}=0.  
\end{equation*}
Putting together the three previous relations completes the proof.
\end{proof}

\section{Asymptotics for large data and $n=2$} 
\label{sectdim2}

We assume throughout this section that $n=2$. In this case we are not able to show a result as general as in dimension $n\geq3$, because  $L^2(\R^2)$ is a critical space (\textit{i.e.} invariant with respect to the scaling of the Navier-Stokes equations). Here, we decompose the initial velocity as follows
\begin{equation*}
u_0=\tu_0+w_0, 
\end{equation*}
where  $\tu_0\in \L2s$ is arbitrarily large and $w_0$ is small in $L^{2,\infty}(\Omega)$. Let us recall now the following classical result on the $L^2$-decay of weak solutions of problem \eqref{nsext}.
\begin{theorem}[{Borchers \& Miyakawa  \cite[Thm.~1.2]{MR1158939}}]\label{thm:L2decay}
 For every $\wu_0\in \L2s$ there is a unique weak solution 
$\tu\in L^\infty((0,\infty);L^2(\Om))\cap L^2_{loc}([0,\infty);H^1(\Om))$
of the  initial-boundary value problem
 \eqref{nsext} with $\tu_0$ as initial velocity.
This solution satisfies  
$
  \lim\limits\limits_{t\to\infty}\|\tu(t)\|_{L^2}=0.
$
\end{theorem}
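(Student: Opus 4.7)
The statement has three ingredients: existence of a weak solution with energy inequality, uniqueness in dimension two, and the $L^2$-decay. I would organize the argument as follows.

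For existence, I would run a standard Galerkin scheme: pick an orthonormal basis of $\L2s$ (for instance the eigenfunctions of the Stokes operator on a large ball $B_k\cap\Omega$ with Dirichlet data, or use Hopf's construction directly on $\Omega$), project the equation, obtain the a priori energy estimate $\|\tu_N(t)\|_{L^2}^2+2\int_0^t\|\nabla\tu_N\|_{L^2}^2\d s\leq\|\tu_0\|_{L^2}^2$, and pass to a weak-$*$ limit using Aubin--Lions for local strong compactness in $L^2$ to handle the quadratic term. This yields a weak solution in $L^\infty(0,\infty;L^2_\sigma)\cap L^2\loc([0,\infty);H^1_0)$ satisfying the energy inequality.

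Uniqueness in two dimensions is then obtained in the usual way. If $\tu_1,\tu_2$ are two such solutions, the difference $w=\tu_1-\tu_2$ satisfies $\partial_t w-\Delta w+\tu_2\cdot\nabla w+w\cdot\nabla\tu_1=-\nabla p$. Testing against $w$, using the Ladyzhenskaya inequality $\|w\|_{L^4}^2\leq C\|w\|_{L^2}\|\nabla w\|_{L^2}$ to bound the quadratic term, and then Young's inequality, one arrives at
\begin{equation*}
\dt\|w\|_{L^2}^2+\|\nabla w\|_{L^2}^2\leq C\|\nabla\tu_1\|_{L^2}^2\|w\|_{L^2}^2.
\end{equation*}
Since $\|\nabla\tu_1\|_{L^2}^2$ is integrable in time, Gronwall closes the uniqueness argument. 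As a byproduct, the energy inequality becomes an equality, so $t\mapsto\|\tu(t)\|_{L^2}^2$ is nonincreasing and admits a limit $L\geq0$.

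For the decay $L=0$, the plan is to split $\tu$ via Duhamel:
\begin{equation*}
\tu(t)=e^{-tA}\tu_0-\int_0^t e^{-(t-s)A}\P\dive(\tu\otimes\tu)(s)\d s.
\end{equation*}
The linear part tends to $0$ in $L^2$: since $A=-\P\Delta$ is self-adjoint on $\L2s$ with purely continuous spectrum included in $[0,\infty)$ (no eigenvalue at $0$, as the associated harmonic Stokes field vanishing on $\Gamma$ and lying in $H^1_0$ must be identically zero), the spectral theorem plus dominated convergence give $\|e^{-tA}\tu_0\|_{L^2}\to0$. For the nonlinear part, the key estimate is \eqref{S3} with $n=q=p=2$,
\begin{equation*}
\bigl\|e^{-(t-s)A}\P\dive F\bigr\|_{L^2(\Omega)}\leq C(t-s)^{-1/2}\|F\|_{L^2(\Omega)},
\end{equation*}
combined with Ladyzhenskaya's inequality $\|\tu\otimes\tu\|_{L^2}\leq C\|\tu\|_{L^2}\|\nabla\tu\|_{L^2}\leq C\|\tu_0\|_{L^2}\|\nabla\tu\|_{L^2}$. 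This reduces matters to showing
\begin{equation*}
\int_0^t(t-s)^{-1/2}\|\nabla\tu(s)\|_{L^2}\d s\longrightarrow0\qquad\text{as }t\to\infty,
\end{equation*}
given that $\|\nabla\tu\|_{L^2}^2\in L^1(0,\infty)$.

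The main obstacle lies precisely in this last step: a crude Cauchy--Schwarz fails because $(t-s)^{-1}$ is not integrable. I would split the integral at $t/2$. On $[t/2,t]$, use Cauchy--Schwarz on the short window: $\int_{t/2}^t(t-s)^{-1/2}\|\nabla\tu\|_{L^2}\d s\leq\bigl(\int_{t/2}^t(t-s)^{-1/2}\d s\bigr)^{1/2}\bigl(\int_{t/2}^t(t-s)^{-1/2}\|\nabla\tu\|_{L^2}^2\d s\bigr)^{1/2}$; the second factor tends to $0$ by integrability of $\|\nabla\tu\|_{L^2}^2$ and a weighted DCT argument, while the first is bounded by $Ct^{1/4}$ after a small refinement. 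The genuinely delicate part is the $[0,t/2]$ contribution, for which I would use $(t-s)^{-1/2}\leq Ct^{-1/2}$ and combine with $\int_0^{t/2}\|\nabla\tu\|_{L^2}\d s\leq Ct^{1/2}\bigl(\int_0^{t/2}\|\nabla\tu\|_{L^2}^2\d s\bigr)^{1/2}$, but this only gives boundedness, not smallness. To gain smallness one must exploit the self-adjoint structure more carefully: apply the spectral decomposition $\tu(t)=\int_0^\infty e^{-t\lambda}\d E_\lambda\tu_0+(\text{nonlinear})$, and combine with $\|\tu(t)\|_{L^2}^2\searrow L$ together with the energy equality to conclude that the spectral mass of $\tu(t)$ concentrates at $\lambda=0$, contradicting $L>0$ unless the solution decays. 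This passage from ``monotone convergence of the energy'' plus ``spectral continuity at $0$'' to ``$L=0$'' is, in my view, the technical heart of the Borchers--Miyakawa argument.
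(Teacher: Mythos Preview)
The paper does not prove this theorem; it is quoted from Borchers--Miyakawa \cite{MR1158939} and used as a black box, so there is no in-paper proof to compare against.

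That said, your argument for the decay has a genuine gap. You reduce matters to showing
\begin{equation*}
\int_0^t(t-s)^{-1/2}\|\nabla\tu(s)\|_{L^2}\,ds\longrightarrow0,
\end{equation*}
but this does \emph{not} follow from $\|\nabla\tu\|_{L^2}^2\in L^1(0,\infty)$ alone: if $\|\nabla\tu(s)\|_{L^2}\sim s^{-1/2}$ for large $s$ (perfectly compatible with the energy bound), the integral tends to the Beta value $B(\tfrac12,\tfrac12)=\pi$, not to zero. Your splitting at $t/2$ does not rescue this. On $[0,t/2]$ you yourself note you only get boundedness; on $[t/2,t]$ your Cauchy--Schwarz produces a factor $\bigl(\int_{t/2}^t(t-s)^{-1/2}\,ds\bigr)^{1/2}\sim t^{1/4}$ against a second factor which in the same borderline profile is $\sim t^{-1/4}$, so the product is again only $O(1)$. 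The closing spectral heuristic (``spectral mass of $\tu(t)$ concentrates at $\lambda=0$'') is not a proof: the nonlinear Duhamel term mixes all spectral scales, and you have not explained how to control its low-frequency projection.

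What is missing is an estimate that beats this critical scaling. One route, in the spirit of Borchers--Miyakawa, is a spectral Fourier splitting: with $E_\rho$ the spectral projection of $A$ onto $[0,\rho]$, bound $\|(I-E_\rho)\tu\|_{L^2}$ by $\rho^{-1/2}\|\nabla\tu\|_{L^2}$ and, for the low-frequency part of the Duhamel integral, replace the non-integrable factor $(t-s)^{-1/2}$ coming from $A^{1/2}e^{-(t-s)A}$ by the operator bound $\|E_\rho A^{1/2}\|\leq\rho^{1/2}$; this converts the bad kernel into a small constant, after which one optimizes in $\rho$. A cleaner alternative is a density argument: your own Gronwall estimate for the difference of two solutions gives $\|\tu_1(t)-\tu_2(t)\|_{L^2}\leq \|\tu_1(0)-\tu_2(0)\|_{L^2}\exp\bigl(C\|\tu_1(0)\|_{L^2}^2\bigr)$ \emph{uniformly in $t$}, so it suffices to prove $\|\tu(t)\|_{L^2}\to0$ for $\tu_0$ in a dense subclass (e.g.\ smooth compactly supported, hence in $L^q_\sigma(\Om)$ for some $q<2$, where \eqref{S1} supplies a polynomial rate and the Duhamel bound does close). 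The existence and uniqueness parts of your sketch are fine; the gap is entirely in the nonlinear decay step.
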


The aim of this section is to show the following theorem.
\begin{theorem}\label{dim2}
Let $\tu_0\in \L2s$  be arbitrary and let $T_\ep$ be a time such that $\nlpw2{\tu(T_\ep)}<\ep/3$, where $\tu$ is the unique global-in-time solution of the Navier-Stokes equations on $\Om$ with initial velocity $\tu_0$ and $\ep>0$ is the small constant from Theorem \ref{corwithsmallu} 
{\rm (}such a time $T_\ep$ exists thanks to Theorem \ref{thm:L2decay} and to the imbedding $L^2(\Omega)\subset L^{2,\infty}(\Omega)${\rm )}. Let $w_0\in L^{2,\infty}(\Om)$ be divergence free, tangent to the boundary, and such that
\begin{equation*} 
 \|w_0\|_{L^{2,\infty}(\Om)}<\frac\ep3 .
\end{equation*}
There exists a constant $K=K(\ep)$ such that if the following additional smallness condition holds true:
  \begin{equation}\label{wsmallbis}
    \|e^{-tA}w_0\|_{L^4((0,T_\ep)\times\Om)}\leq\frac1{K\nlp2{\tu_0}e^{K\nlp2{\tu_0}^4}}
  \end{equation}
then the Navier-Stokes equations on the exterior domain $\Om$ with 
the initial data $u_0=\tu_0+w_0$ has a unique global solution $u$. This solution verifies  that 
$
\sup_{t>0}t^{\frac12-\frac 1{p}}\nlp p{u(t)}<\infty.  
$

Moreover, let $v_0\in L^{2,\infty}(\R^2)$ be such that $\|v_0\|_{L^{2,\infty}(\R^2)}<\ep$ and such that $w_0$ and $v_0$ are comparable at infinity. Let $v$ be the unique global solution of the Navier-Stokes equations on $\R^2$ with initial data $v_0$. Then
$u$ and $v$ have the same asymptotic behaviour as $t\to\infty$ in the  sense of relation \eqref{intro:asymp}.
\end{theorem}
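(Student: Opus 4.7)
The plan is to split time at $T_\ep$, build $u$ on $[0,T_\ep]$ as a perturbation of the Leray--Hopf solution $\tu$, and then invoke Theorem~\ref{corwithsmallu} for $t\geq T_\ep$. On $[0,T_\ep]$, let $\tu$ denote the unique Borchers--Miyakawa solution supplied by Theorem~\ref{thm:L2decay} with initial data $\tu_0$; writing $u=\tu+W$ reduces the problem to finding $W$ solving
\begin{equation*}
\partial_t W - \Delta W + \tu\cdot\nabla W + W\cdot\nabla\tu + W\cdot\nabla W = -\nabla p, \qquad \dive W=0,\quad W\bigl|_{\Gamma}=0,\quad W(0)=w_0.
\end{equation*}

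I would solve this by a fixed point in the scaling-critical space $X_I=L^4(I\times\Om)$ based on the Duhamel formula. The bilinear estimate
\[
\|B(f,g)\|_{X_I}\leq C\|f\|_{X_I}\|g\|_{X_I},\qquad B(f,g)(t)=\int_0^t e^{-(t-s)A}\P\dive(f\otimes g)(s)\,ds,
\]
follows from \eqref{S3} at $(q,p)=(2,4)$, which yields the time kernel $(t-s)^{-3/4}$, together with Hardy--Littlewood--Sobolev (since $L^{4/3,\infty}\ast L^2\subset L^4$). The energy identity for $\tu$ combined with Ladyzhenskaya's inequality shows $\|\tu\|_{L^4((0,T_\ep)\times\Om)}\leq C\|\tu_0\|_{L^2}$, which may be arbitrarily large and precludes a one-shot fixed point. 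To compensate, I would partition $[0,T_\ep]$ into $N\simeq\|\tu_0\|_{L^2}^4$ subintervals $I_k$ on which $\|\tu\|_{X_{I_k}}\leq\eta$ for a fixed small $\eta$, and run a small-data fixed point on each $I_k$ starting from $W(t_k)$. Each step multiplies the running norm by a fixed factor $\kappa>1$, so after $N$ steps the amplification is $\kappa^N=e^{K\|\tu_0\|_{L^2}^4}$, precisely the factor absorbed by hypothesis \eqref{wsmallbis}. That hypothesis therefore guarantees that $\|W\|_{L^4((0,T_\ep)\times\Om)}$ is small; combining with \eqref{S1bis2} applied to $e^{-tA}w_0$ and a persistence argument for the nonlinear Duhamel map into $C([0,T_\ep];L^{2,\infty}_\sigma(\Om))$ yields $\|W(T_\ep)\|_{L^{2,\infty}(\Om)}<\ep/3$, and hence $\|u(T_\ep)\|_{L^{2,\infty}(\Om)}<\ep$.

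For $t\geq T_\ep$ I would apply the time-translated version of Theorem~\ref{corwithsmallu} to the pair $(u,v)$. The whole-space solution $v$ is global and small since $\|v_0\|_{L^{2,\infty}(\R^2)}<\ep$, so $\|v(T_\ep)\|_{L^{2,\infty}(\R^2)}\lesssim\ep$ as well. It remains to verify that $u(T_\ep)$ and $v(T_\ep)$ are comparable at infinity. Splitting
\begin{equation*}
u(T_\ep)-v(T_\ep)\bigl|_\Om = \tu(T_\ep) + \bigl(W(T_\ep)-v(T_\ep)\bigl|_\Om\bigr),
\end{equation*}
the first summand lies in $L^2(\Om)$, and for the second the comparability hypothesis yields $w_0-v_0\bigl|_\Om\in L^{q_0}(\Om)$ for some $q_0\in(1,2]$; I would propagate this property to time $T_\ep$ by subtracting the Duhamel formulas for $W$ and for $v\bigl|_\Om$, using the boundedness of $\P$ on $L^{q_0}$, the decay estimates of Proposition~\ref{ellpest}, and H\"older's inequality to control the bilinear corrections. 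The difference thus belongs to $L^2(\Om)+L^{q_0}(\Om)$, which is exactly the generalized comparable-at-infinity situation handled at the end of the proof of Theorem~\ref{largedata}; the proof of Theorem~\ref{corwithsmallu} carries over verbatim in this framework and delivers \eqref{intro:asymp}. The uniform bound on $t^{1/2-1/p}\|u(t)\|_{L^p}$ is obtained by combining the small-data estimate on $[T_\ep,\infty)$ with the $X$-bound on $[0,T_\ep]$ and interpolation.

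The technical heart of the argument, and the main obstacle, is the interplay between the critical $L^4_{t,x}$ fixed point, which cleanly handles the quadratic structure, and the Marcinkiewicz-in-space control needed both to restart the iteration at each splitting time $t_k$ and to certify smallness of $u(T_\ep)$ in $L^{2,\infty}$. Establishing persistence of $L^{2,\infty}$ regularity through the nonlinear Duhamel map, with a per-subinterval constant that does not degrade as $N\to\infty$, is precisely what forces the exponential-in-$\|\tu_0\|_{L^2}^4$ smallness built into \eqref{wsmallbis}.
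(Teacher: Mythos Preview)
Your overall architecture---show $\nlpw2{u(T_\ep)}<\ep$ on $[0,T_\ep]$, then invoke the small-data theory---matches the paper, but both phases are executed differently, and the second phase of your proposal has a genuine gap.

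\medskip
\noindent\textbf{On $[0,T_\ep]$.} The paper does not construct $u$; global existence and uniqueness are simply quoted from \cite{KY95}. It then sets $\wb=e^{-tA}w_0$ and $z=u-\tu-\wb$, multiplies the equation for $z$ by $z$, and obtains the energy inequality
\[
\partial_t\nlp2 z^2+\nlp2{\nabla z}^2\leq C\nlp4\wb^2\bigl(\nlp4\wb^2+\nlp4\tu^2\bigr)+C\nlp2 z^2\bigl(\nlp4\wb^4+\nlp4\tu^4\bigr).
\]
A single Gronwall, together with $\|\tu\|_{L^4((0,\infty)\times\Om)}\leq C\nlp2{\tu_0}$, gives $\nlp2{z(T_\ep)}$ small under \eqref{wsmallbis}, hence $\nlpw2{u(T_\ep)}<\ep$. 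This is considerably more elementary than your subinterval fixed-point scheme and delivers the crucial extra information $z(T_\ep)\in L^2(\Om)$ for free. Your $L^4_{t,x}$ iteration is in the right spirit, but note a technical snag in the restart: knowing only $W(t_k)\in L^{2,\infty}(\Om)$ gives $\nlp4{e^{-(t-t_k)A}W(t_k)}\lesssim(t-t_k)^{-1/4}$, whose fourth power is not integrable near $t_k$, so the source term is not in $L^4(I_{k+1}\times\Om)$ without a finer decomposition.

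\medskip
\noindent\textbf{For $t\geq T_\ep$.} Here your plan to ``subtract the Duhamel formulas for $W$ and for $v\bigl|_\Om$'' and propagate $w_0-v_0\bigl|_\Om\in L^{q_0}(\Om)$ to time $T_\ep$ does not work as stated. The Duhamel formula for $W$ is built with the Stokes semigroup $e^{-tA}$ on $\Om$, while $v$ evolves by the heat semigroup on $\R^2$; the linear parts $e^{-T_\ep A}w_0$ and $(e^{T_\ep\Delta}v_0)\bigl|_\Om$ do not cancel in any $L^q(\Om)$ with $q\leq2$, and controlling their difference is precisely the obstacle-versus-whole-space comparison that Theorem~\ref{withsmallu} treats only asymptotically, not at a fixed finite time. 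The paper sidesteps this entirely by inserting the auxiliary small solution $w$ of \eqref{nsext} on $\Om$ with initial datum $w_0$. Since $w_0$ is small in $L^{2,\infty}(\Om)$ and comparable at infinity to $v_0$, Theorem~\ref{corwithsmallu} (applied from $t=0$) gives $t^{\frac12-\frac1p}\nlp p{w(t)-v(t)}\to0$. On the other hand, the same energy computation with $\tu\equiv0$ shows $w(T_\ep)-\wb(T_\ep)\in L^2(\Om)$, whence $u(T_\ep)-w(T_\ep)=\tu(T_\ep)+z(T_\ep)+(\wb(T_\ep)-w(T_\ep))\in L^2(\Om)$; the stability Theorem~\ref{stability} applied from time $T_\ep$ then yields $t^{\frac12-\frac1p}\nlp p{u(t)-w(t)}\to0$. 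Combining the two limits gives \eqref{intro:asymp}. The introduction of this intermediate $w$ is the idea your proposal is missing.
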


\begin{remark}
If $ \nlp2{\tu_0}$ is sufficiently small, then the initial data $u_0$ is small in $L^{2,\infty}(\Om)$ and we can directly apply  Theorem  \ref{corwithsmallu} to reach the desired conclusion without the need to assume that $e^{-tA}w_0\in L\loc^4(\R_+;L^4(\Om))$ and  condition \eqref{wsmallbis}.
\end{remark}

\begin{remark}\label{rem:d2:old}
The dependence on the parameter $T_\ep$ in the smallness condition \eqref{wsmallbis} is not explicit. 
Nevertheless, it can  be made explicit if $w_0\in L^a(\Om)$ for some $a>2$. Indeed, for $w_0\in L^a(\Om)$, the decay estimates from Proposition \ref{ellpest} imply that $\nlp4{e^{-tA}w_0}\leq C\nlp a{w_0}t^{\frac14-\frac1a}$ (note that we can assume that $a\leq4$ because we also have that that $w_0\in L^{2,\infty}(\Om)$ so, by interpolation, $w_0$ belongs to all intermediate spaces $L^b(\Omega)$ for all  $2<b\leq a$). Therefore  $\|e^{-tA}w_0\|_{L^4((0,T_\ep)\times\Om)}\leq C\nlp a{w_0} T_\ep^{\frac12-\frac1a}$. We conclude that the smallness condition \eqref{wsmallbis} is implied by the following smallness condition
\begin{equation*}
 \nlp a{w_0}\leq\frac1{CT_\ep^{\frac12-\frac1a}\nlp2{\tu_0}e^{C\nlp2{\tu_0}^4}}
\end{equation*}
for some large constant $C$. In particular, the result in \cite{ILK11} is a special case of Theorem \ref{dim2}.
\end{remark}

\begin{remark}
The condition \eqref{wsmallbis} requires in particular that $e^{-tA}w_0\in L^4_{loc}([0,\infty);L^4(\Om))$.
The optimal assumption required on $w_0$ in order to have this property is that $w_0$ belongs to the inhomogeneous Besov space  $B^{-\frac12}_{4,4}(\Om)$ (see \cite{amann_nonhomogeneous_2002,depauw_solutions_2001,MR1938147}). Therefore, Theorem \ref{dim2} can be reformulated in the following way: 
for every $\tu_0\in L^2(\Om)$ which is divergence free and tangent to the boundary, there exists $\ep_3=\ep_3(\Om)>0$ and $\ep_4=\ep_4(\tu_0)>0$ such that if $w_0\in L^{2,\infty}(\Om)\cap B^{-\frac12}_{4,4}(\Om) $, $\nlpw2{w_0}<\ep_3$, and $\|w_0\|_{  B^{-\frac12}_{4,4}(\Om)}<\ep_4$ then the conclusion of Theorem~\ref{dim2} holds true. 
\end{remark}
\begin{proof}[Proof of Theorem \ref{dim2}.]
The global  existence and the uniqueness of $u$ is proved in \cite[Theorem 4]{KY95}. However, the bound for $t^{\frac12-\frac 1{p}}\nlp p{u(t)}$ proved in \cite{KY95} is only local in time, \textit{i.e.} it is only shown that  $\sup\limits_{0<t<T}t^{\frac12-\frac 1{p}}\nlp p{u(t)}\leq C(T)$ for all $T<\infty$. Here we will show that this bound is also global, \textit{i.e.} we can have $T=\infty$.

Let $\wb=e^{-tA}w_0$ and $z=u-\tu-\wb $. Then, the vector field  $z$ is divergence free, vanishes on the boundary, and verifies the following system 
\begin{equation*}
\partial_t z-\Delta z+(z+\wb +\tu)\cdot\nabla  (z+\wb +\tu)-\tu\cdot\nabla\tu=- \nabla \overline p,
\end{equation*}
where $\overline p=p-p_{\tu}-p_{\wb} $, the scalar function  $p_{\tu}$ is the pressure corresponding to $\tu$, and $p_{\wb} $ is the pressure from the Stokes problem satisfied by $\wb $. We multiply the above relation by $z$ and integrate over $\Omega$ to obtain
\begin{align*}
\frac12\partial_t\nlp2z^2+\nlp2{\nabla z}^2
&=\int_\Om  \tu\cdot\nabla\tu\cdot z-\int_\Om (z+\wb +\tu)\cdot\nabla  (z+\wb +\tu)\cdot z\\
&=\int_\Om (z+\wb +\tu)\cdot\nabla  z\cdot \wb +\int_\Om (z+\wb )\cdot\nabla  z\cdot \tu\\
&\leq C\nlp2{\nabla z}(\nlp4\wb ^2+\nlp4\wb \nlp4{\tu}\\
&\hskip 4cm +\nlp4\wb \nlp4z+\nlp4{\tu}\nlp4z).
\end{align*}
Next, we use the bound  $\nlp4z\leq C\nlp2z^{\frac12}\nlp2{\nabla z}^{\frac12}$ and 
we apply  the Young inequality: 
\begin{align*}
 \partial_t\nlp2z^2+\nlp2{\nabla z}^2
\leq & C\nlp4\wb ^2(\nlp4\wb ^2+\nlp4{\tu}^2)\\&+C\nlp2z^2(\nlp4\wb ^4+\nlp4{\tu}^4). 
\end{align*}
The Gronwall and H\"older inequalities imply that
\begin{equation*}
\nlp2{z(t)}^2\leq C\|\wb \|_{L^4((0,t)\times\Om)}^2\Big(\|\wb \|_{L^4((0,t)\times\Om)}^2+\|\tu\|_{L^4((0,t)\times\Om)}^2\Big)e^{C(\|\wb \|_{L^4((0,t)\times\Om)}^4+\|\tu\|_{L^4((0,t)\times\Om)}^4)}.
\end{equation*}

Since $\tu$ is a solution of the Navier-Stokes equations, the classical energy estimate reads
$$
\nlp2{\tu(t)}+2\int_0^t\nlp2{\nabla\tu}^2\leq\nlp2{\tu_0}.$$ 
Using the inequality $\nlp4{\tu}\leq C\nlp2{\tu}^{\frac12}\nlp2{\nabla \tu}^{\frac12}$, we infer that $\|\tu\|_{L^4((0,t)\times\Om)}\leq C\nlp2{\tu_0}$ for all $t>0$. 
Hence, we have
\begin{equation}\label{boundzf}
 \nlp2{z(T_\ep)}^2\leq C_1\|\wb \|_{L^4((0,T_\ep)\times\Om)}^2\Big(\|\wb \|_{L^4((0,T_\ep)\times\Om)}^2+\nlp2{\tu_0}^2\Big)
e^{C_1\left(\|\wb \|_{L^4((0,T_\ep)\times\Om)}^4+\nlp2{\tu_0}^4\right)}
\end{equation}
for some constant $C_1$.

If $\nlp2{\tu_0}$ is small, then $\|u_0\|_{L^{2,\infty}(\Om)}$ is also small and the conclusion follows after applying  Theorem \ref{corwithsmallu}. More precisely, there exists $\eta=\eta(\ep)$ such that if $\nlp2{\tu_0}\leq\eta$ then $\|\tu_0\|_{L^{2,\infty}(\Om)}\leq \frac\ep3$. Therefore if $\nlp2{\tu_0}\leq\eta$ then $\|u_0\|_{L^{2,\infty}(\Om)}\leq \ep$ and the desired conclusions follow at once from Theorem \ref{corwithsmallu}.

We assume now that $\nlp2{\tu_0}>\eta$ and that relation \eqref{wsmallbis} holds true for some constant $K$ to be determined later. We impose first that $K$ verifies the condition
$1<K\eta^2 e^{K\eta^4}  $. Then $\nlp2{\tu_0}>\eta$ and  \eqref{wsmallbis} imply that  $\|\wb\|_{L^4((0,T_\ep)\times\Om)}\leq \nlp2{\tu_0}$. We infer from \eqref{boundzf} and  \eqref{wsmallbis} that
\begin{equation*}
 \nlp2{z(T_\ep)}^2\leq 2C_1\|\wb \|_{L^4((0,T_\ep)\times\Om)}^2\nlp2{\tu_0}^2e^{2C_1\nlp2{\tu_0}^4}
\leq \frac{2C_1}{K^2}e^{2(C_1-K)\nlp2{\tu_0}^4}\leq \frac{2C_1}{K^2}
\end{equation*}
if we further assume that $K\geq C_1$. Therefore, if $K$ is chosen sufficiently large then we can conclude  that $\nlpw2{z(T_\ep)}<\frac\ep3$ implying that  
$$
\nlpw2{u(T_\ep)}\leq \nlpw2{\tu(T_\ep)}+\nlpw2{\wb(T_\ep)}+\nlpw2{z(T_\ep)}<\ep.
$$

We have reached a time where the solution $u$ becomes small in $L^{2,\infty}(\Omega)$. We wish to conclude as in the proof of the corresponding result in dimension $n\geq3$ stated in Theorem~\ref{largedata}. If we look at that proof, we notice that we need to show that $u(T_\ep)-w(T_\ep)\in L^2(\Om)$ where $w$ denotes the Navier-Stokes solution on the exterior domain with initial data $w_0$. Since $ \tu(T_\ep)\in L^2(\Om)$ and $ z(T_\ep)\in L^2(\Om)$ (as shown above) it suffices to prove that $w(T_\ep)-\wb(T_\ep)\in L^2(\Om)$. But this is a particular case of the estimates obtained above. Indeed, if we set $\tu=0$ then $u=w$ and $w(T_\ep)-\wb(T_\ep)=z(T_\ep)\in L^2(\Om)$. 

The end of the proof is similar to the end of the proof of Theorem \ref{largedata}. On one hand, we use that  $u(T_\ep)-w(T_\ep)\in L^2(\Om)$ and we apply the stability result from Theorem \ref{stability} starting from time $T_\ep$ to deduce that $u$ and $w$ have the same asymptotic behaviour as $t\to\infty$. On the other hand we use Theorem \ref{corwithsmallu} to say that $w$ and $v$ have the same asymptotic behaviour as $t\to\infty$. We conclude that $u$ and $v$ have the same asymptotic behaviour as $t\to\infty$.
\end{proof}

\begin{remark}
It is of course possible  to add a large square-integrable part to $v_0$. Assuming similar hypothesis as for the data in the exterior domain, more precisely as in relation \eqref{wsmallbis}, we can show exactly in the same manner the existence of a unique global solution which has the same asymptotic behaviour as $u$ at infinity. We do not follow this way in order to have a simpler statement for Theorem \ref{dim2}.  
\end{remark}

{\bf Acknowledgments.}
The authors would like to express their gratitude to Thierry Gallay for his helpful remarks 
which allowed them to simplify and to generalize the previous version of this work.

The work of the second author was partially supported 
by the NCN grant No.~N~N201 418839 and 
the Foundation for Polish Science operated within the
Innovative Economy Operational Programme 2007-2013 funded by European
Regional Development Fund (Ph.D. Programme: Mathematical
Methods in Natural Sciences). The third author is partially supported by the Project ``Instabilities in Hydrodynamics'' financed by Paris city hall (program ``Emergences'') and the Fondation Sciences Math\'ematiques de Paris.

\def\cprime{$'$} \def\cprime{$'$} \def\cydot{{\l}eavevmode\raise.4ex\hbox{.}}
  \def\cprime{$'$} \def\cprime{$'$}
  \def\polhk\#1{\setbox0=\hbox{\#1}{{\o}oalign{\hidewidth
  {\l}ower1.5ex\hbox{`}\hidewidth\crcr\unhbox0}}}

\adrese

\end{document}